\begin{document}

\newcommand{\F}{\mathcal{F}}
\newcommand{\R}{\mathbb R}
\newcommand{\T}{\mathbb T}
\newcommand{\N}{\mathbb N}
\newcommand{\Z}{\mathbb Z}
\newcommand{\C}{\mathbb C}  
\newcommand{\h}[2]{\mbox{$ \widehat{H^{#1}_{#2}}$}}
\newcommand{\hh}[3]{\mbox{$ \widehat{H^{#1}_{#2, #3}}$}} 
\newcommand{\n}[2]{\mbox{$ \| #1\| _{ #2} $}} 
\newcommand{\x}{\mbox{$X^r_{s,b}$}} 
\newcommand{\xx}{\mbox{$X_{s,b}$}}
\newcommand{\X}[3]{\mbox{$X^{#1}_{#2,#3}$}} 
\newcommand{\XX}[2]{\mbox{$X_{#1,#2}$}}
\newcommand{\q}[2]{\mbox{$ {\| #1 \|}^2_{#2} $}}
\newcommand{\e}{\varepsilon}
\newcommand{\lb}{\langle}
\newcommand{\rb}{\rangle}
\newcommand{\ls}{\lesssim}
\newcommand{\gs}{\gtrsim}
\newcommand{\pd}{\partial}
\newtheorem{lemma}{Lemma} 
\newtheorem{kor}{Corollary} 
\newtheorem{theorem}{Theorem}
\newtheorem{prop}{Proposition}

\title[Space-time estimates for KP-type equations]{Bilinear space-time estimates for linearised KP-type equations on the three-dimensional torus with applications}

\author[A.~Gr{\"u}nrock]{Axel~Gr{\"u}nrock}

\address{Axel~Gr{\"u}nrock: Rheinische Friedrich-Wilhelms-Universit\"at Bonn,
Mathematisches Institut, Beringstra{\ss}e 1, 53115 Bonn, Germany.}
\email{gruenroc@math.uni-bonn.de}

\thanks{The author was partially supported by the Deutsche Forschungsgemeinschaft, Sonderforschungsbereich 611.}

\subjclass{35Q53}

\begin{abstract}
A bilinear estimate in terms of Bourgain spaces associated with a linearised
Kadomtsev-Petviashvili-type equation on the three-dimensional torus is shown. As a
consequence, time localized linear and bilinear space time estimates for this
equation are obtained. Applications to the local and global well-posedness of
dispersion generalised KP-II equations are discussed. Especially it is proved that
the periodic boundary value problem for the original KP-II equation is locally
well-posed for data in the anisotropic Sobolev spaces $H^s_xH^{\e}_y(\T^3)$, if
$s \ge \frac12$ and $\e > 0$.
\end{abstract}

\maketitle

\section{Introduction and main results}

In a recent paper \cite{GPS08} joint with M. Panthee and J. Silva we investigated local
and global well-posedness issues of the Cauchy problem for the dispersion generalised
Kadomtsev-Petviashvili-II (KP-II) equation

\begin{equation}
\label{KPII}
\left\{
\begin{array}{l}
\pd_t u - |D_x|^{\alpha}\pd_x u+\pd_x^{-1}\Delta_{y} u +u\pd_x u=0 \\
u(0,x,y)=u_0(x,y)
\end{array}
\right.
\end{equation}

on the cylinders $\T \times \R$ and $\T \times \R^2$, respectively. We considered data $u_0$ satisfying the mean zero condition

\begin{equation}
\label{meanzero}
\int_0^{2\pi}u_0(x,y) dx =0
\end{equation}

and belonging to the anisotropic Sobolev spaces $H^s_x(\T)H^{\e}_y(\R^{n-1})$, 
$n \in \{2,3\}$. We could prove quite general (with respect to the dispersion
parameter $\alpha$) local well-posedness results, to a large extent optimal - up to the endpoint - 
(with respect to the Sobolev regularity). In two dimensions and for higher
dispersion ($\alpha > 3$) in three dimensions, these local results could be combined
with the conservation of the $L^2$-norm to obtain global well-posedness.

A key tool to obtain these results were certain bilinear space time estimates for
free solutions, similar to Strichartz estimates. A central argument to obtain the space
time estimates was the following simple observation. Consider a linearised version
of \eqref{KPII} with a more general phase function

\begin{equation}
\label{KPlin}
\left\{
\begin{array}{l}
\pd_t u - i\phi(D_x,D_y)u:=\pd_t u -  i\phi_0(D_x)u+\pd_x^{-1}\Delta_{y} u =0 \\
u(0,x,y)=u_0(x,y)
\end{array}
\right.
\end{equation}

where $\phi_0$ is arbitrary at the moment, with solution $u(x,y,t)=e^{it\phi(D_x,D_y)}u_0(x,y)$.
Then we can take the partial Fourier transform $\F_x$ with respect to the first spatial
variable $x$ only to obtain

$$\F_x e^{it\phi(D_x,D_y)}u_0 (k,y) = e^{it\phi_0(k)}e^{i\frac{t}{k}\Delta_y}\F_xu_0(k,y).$$

Fixing $k$ we have a solution of the free Schr\"odinger equation - with rescaled time
variable $s:=\frac{t}{k}$, and multiplied by a phase factor of size one. Now the whole
Schr\"odinger theory - Strichartz estimates, bilinear refinements thereof, local
smoothing and maximal function estimates - is applicable to obtain space time estimates
for the linearised KP-type equation \eqref{KPlin}.

While in two space dimensions this simple argument has to be supplemented by further
estimates depending on $\phi_0$, we could obtain (almost) sharp estimates in the three-dimensional
$\T \times \R^2$-case \emph{only} by using the "Schr\"odinger trick" described
above. In view of Bourgain's $L^4_{xt}$-estimate for free solutions of the Schr\"odinger
equation with data defined on the two-dimensional torus \cite[first part of Prop. 3.6]{B93a}, the question
comes up naturally, if our analysis in \cite{GPS08} concerning $\T \times \R^2$ can
be extended to KP-type equations on $\T^3$, and that's precisely the aim of the present
paper.

To state our main results we have to introduce some more notation: We will consider
functions $u,v,\dots$ of $(x,y,t) \in \T \times \T^2 \times \R $ with Fourier transform
$\widehat{u},\widehat{v},\dots$, sometimes written as $\F u, \F v,\dots$, depending on
the dual variables $(\xi, \tau):=(k,\eta,\tau) \in \Z \times \Z^2 \times \R$. Throughout
the paper we assume $u,v,\dots$ to fulfill the mean zero condition $\widehat{u}(0,\eta,\tau)=0$.
For these functions we define the norms

$$\|u\|_{X_{s,\e,b}}:=\| |k|^{s}\lb\eta\rb^{\e}\lb\sigma\rb^{b}\widehat{u}\|_{L^2_{\xi,\tau}},$$

where $\lb x \rb^2=1+|x|^2$ and $\sigma = \tau-\phi(\xi) = \tau-\phi_0(k)+\frac{|\eta|^2}{k}$. Allthough
some of our arguments do not rely on that, we will always assume $\phi_0$ to be odd, in
order to have $\|u\|_{X_{s,\e,b}}=\|\overline{u}\|_{X_{s,\e,b}}$. For $\e=0$ we
abbreviate $\|u\|_{X_{s,\e,b}}=\|u\|_{X_{s,b}}$. In these terms our central bilinear
space time estimate reads as follows.

\begin{theorem} 
\label{mainest}
Let $b > \frac12$, $s_{1,2} \ge 0$ with $s_1+s_2>1$ and $\e _{0,1,2} \ge 0$ with $\e _0+\e _1+\e _2>0$.
Then the estimate
\begin{equation}\label{bil}
\|D_y^{-\e_0}(uv)\|_{L^2_{xyt}} \ls \|u\|_{X_{s_1,\e _1,b}}\|v\|_{X_{s_2,\e _2,b}}
\end{equation}
and its dualized version
\begin{equation}\label{bil'}
\|uv\|_{X_{-s_1,-\e _1,-b}}\ls \|D_y^{\e_0}u\|_{L^2_{xyt}} \|v\|_{X_{s_2,\e _2,b}}
\end{equation}
hold true.
\end{theorem}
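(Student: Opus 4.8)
\medskip

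The plan is to treat \eqref{bil} as a Fourier restriction norm ($X_{s,\e,b}$-multiplier) estimate and to deduce \eqref{bil'} from it by duality. Setting $F=|k_1|^{s_1}\langle\eta_1\rangle^{\e_1}\langle\sigma_1\rangle^{b}\widehat u$ and $G=|k_2|^{s_2}\langle\eta_2\rangle^{\e_2}\langle\sigma_2\rangle^{b}\widehat v$, so that $\|F\|_{L^2}=\|u\|_{X_{s_1,\e_1,b}}$ and $\|G\|_{L^2}=\|v\|_{X_{s_2,\e_2,b}}$, the left-hand side of \eqref{bil} becomes the $L^2_{\xi,\tau}$-norm of the convolution of $F$ and $G$ carrying the weight $\langle\eta\rangle^{-\e_0}|k_1|^{-s_1}|k_2|^{-s_2}\langle\eta_1\rangle^{-\e_1}\langle\eta_2\rangle^{-\e_2}\langle\sigma_1\rangle^{-b}\langle\sigma_2\rangle^{-b}$, subject to $\xi_1+\xi_2=\xi$, $\tau_1+\tau_2=\tau$. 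The decisive algebraic fact is the resonance identity
\[
\phi(\xi_1)+\phi(\xi_2)-\phi(\xi)=\bigl(\phi_0(k_1)+\phi_0(k_2)-\phi_0(k)\bigr)-\frac{|k_2\eta_1-k_1\eta_2|^2}{kk_1k_2},
\]
whose $\eta$-part is exactly the resonance function of the $\tfrac1k$-rescaled Schr\"odinger equation on $\T^2$. This is where the ``Schr\"odinger trick'' of the introduction enters: for each fixed pair $k_1,k_2$ the two factors are Schr\"odinger waves on $\T^2$ evolving at the \emph{rescaled} times $t/k_1$ and $t/k_2$.

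The harmonic-analytic engine is therefore Bourgain's $L^4_{xt}$-estimate on $\T^2$, or rather its bilinear $L^2$-refinement, together with the circle/divisor bound that limits the number of resonant interactions to $N^{\e}$. I would run this fibrewise in the $x$-frequency $k$: using $\F_x$ and Plancherel in $x$,
\[
\|D_y^{-\e_0}(uv)\|_{L^2_{xyt}}^2=\sum_{k}\Big\|D_y^{-\e_0}\!\!\sum_{k_1+k_2=k}\!\!\F_xu(k_1)\,\F_xv(k_2)\Big\|_{L^2_{yt}}^2,
\]
and then apply the ($\tfrac1k$-rescaled) bilinear Schr\"odinger estimate to each product $\F_xu(k_1)\,\F_xv(k_2)$. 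Two features of the hypotheses are consumed along the way. The unavoidable $N^{\e}$ loss of Bourgain's estimate may, in its bilinear form, be placed on the output frequency or on either input, so it is absorbed as soon as $\e_0+\e_1+\e_2>0$. The time-dilations $t/k_1,t/k_2$ in turn produce powers of $|k_1|,|k_2|$ in the rescaled bilinear constant; collecting these and summing over $k_1$ (with $k_2=k-k_1$) converges uniformly in the output frequency $k$ precisely when $s_1+s_2>1$, while $b>\tfrac12$ renders the $\tau_1$-integration (a convolution of two $\langle\,\cdot\,\rangle^{-2b}$) harmless.

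The hard part will be the mismatch $t/k_1\neq t/k_2$: because the two factors solve Schr\"odinger equations in different rescaled times, no single estimate applies directly to their product and one is forced to argue fibrewise and resum in $k$. The delicate regime is that of large, nearly opposite $x$-frequencies, $|k_1|,|k_2|\gg|k|$, where the dispersion of the product degenerates and the frequencies $\eta_1$ resonant with a fixed output fill a fat annulus of $\sim|k_1k_2|/|k|$ lattice points. Here a crude Cauchy--Schwarz in the interaction variables is hopeless: it ignores that these many interactions add \emph{coherently} at few output points (an effect compensated only by the $|k_i|^{s_i}$ in the norms), and, applied naively, it would force $s_1+s_2>\tfrac32$. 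It is therefore essential to keep the genuine $L^2$-orthogonality encoded in Bourgain's bilinear estimate rather than to bound by a product of two $L^4$-norms; I expect this regime to be exactly where $s_1+s_2>1$ is sharp.

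Finally, \eqref{bil'} follows from \eqref{bil} by duality. Writing $\|uv\|_{X_{-s_1,-\e_1,-b}}=\sup\{|\langle uv,w\rangle|:\|w\|_{X_{s_1,\e_1,b}}\le1\}$ and pairing $D_y^{\e_0}u$ against $D_y^{-\e_0}(\overline v\,w)$, Cauchy--Schwarz in $L^2_{xyt}$ followed by \eqref{bil} gives $\|uv\|_{X_{-s_1,-\e_1,-b}}\ls\|D_y^{\e_0}u\|_{L^2}\,\|\overline v\|_{X_{s_2,\e_2,b}}$, and $\|\overline v\|_{X_{s_2,\e_2,b}}=\|v\|_{X_{s_2,\e_2,b}}$ by the oddness of $\phi_0$ arranged above.
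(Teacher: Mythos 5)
Your plan correctly identifies the overall architecture --- Fourier-side convolution, the resonance identity whose $\eta$-part is the rescaled Schr\"odinger resonance, a lattice-point count as the source of the $\e$-loss, and duality for \eqref{bil'} --- but at the two places where the actual work happens you either invoke the wrong tool or explicitly defer the difficulty. First, the counting. After completing the square in $\eta_1$ one must count lattice points $\eta_1\in\Z^2$ with $r\le|\eta_1-\tfrac{k_1}{k}\eta|^2<r+1$, and the centre $\tfrac{k_1}{k}\eta$ is in general \emph{not} a lattice point, so the classical sum-of-two-squares bound (the ``circle/divisor bound'' you appeal to, i.e.\ Theorem 338 of Hardy--Wright, which underlies Bourgain's $L^4(\T^2)$ estimate) does not apply directly. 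The paper's Lemma \ref{diophant} --- extending the $r^{\e}$ count to circles centred at arbitrary $\delta\in\R^2$, via a parity argument for dyadic-rational centres and an $r^{-1/2}$-approximation for general ones --- is the genuinely new ingredient here, and your proposal contains no substitute for it. Second, the regime $|k_1|\sim|k_2|\gg|k|$ that you single out as ``the hard part'' and where you predict sharpness is not resolved in your sketch; in the paper it simply does not occur, because $\|uv\|_{L^2_{xyt}}=\|u\overline v\|_{L^2_{xyt}}$ together with $\|\overline v\|_{X_{s,b}}=\|v\|_{X_{s,b}}$ (oddness of $\phi_0$) reduces everything to $k_1,k_2$ of the same sign, hence $0<|k_2|\le|k_1|<|k|$; there the elementary bound $\sum_{r\ge0}\lb a+\tfrac{k}{k_1k_2}r\rb^{-2b}\ls|k_2|$ plus Cauchy--Schwarz in $k_1$ gives the case $(s_1,s_2)=(0,s)$ with $s>1$, and bilinear interpolation then yields all of $s_1+s_2>1$. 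You use the conjugation identity only in the final duality step, where it is not the point.

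A further unproved assertion is that the $\e$-loss ``may be placed on the output frequency or on either input''. Placing it on an input requires first proving the localized estimate $\|(P_Bu)v\|_{L^2_{xyt}}\ls R^{\e}\|u\|_{X_{s_1,b}}\|v\|_{X_{s_2,b}}$ with loss in the \emph{radius} $R$ of a disc of arbitrary position (which needs Corollary \ref{elephant}, i.e.\ the interplay between the annulus count and the few-lattice-points-on-a-short-arc lemma of \cite{dSPST}), followed by a dyadic summation in $\eta_1$; placing it on the output ($\e_0>0$, $\e_1=\e_2=0$) additionally requires the almost-orthogonality argument over the tiling $\{Q^l_{\alpha}\}$ carried out in the paper. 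None of this is automatic from a fibrewise bilinear Schr\"odinger estimate, especially since --- as you note yourself --- the two factors evolve at different rescaled times $t/k_1\ne t/k_2$, so there is no off-the-shelf bilinear estimate to cite. Your duality argument for \eqref{bil'} is fine and matches the paper's.
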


Taking $\e_0=0$ and $u=v$ we obtain the linear estimate

\begin{equation}\label{lin}
\|u\|_{L^4_{xyt}} \ls \|u\|_{X_{s,\e ,b}},
\end{equation}

whenever $s,b > \frac12$ and $\e > 0$. The estimate \eqref{bil} can
be applied to time localised solutions $e^{it\phi(D_x,D_y)}u_0$ and $e^{it\phi(D_x,D_y)}v_0$ of
\eqref{KPlin} to obtain

\begin{equation}
\|D_y^{-\e_0}(e^{it\phi(D_x,D_y)}u_0e^{it\phi(D_x,D_y)}v_0)\|_{L_t^2([0,1],L^2_{xy})}\ls \|u_0\|_{H^{s_1}_xH^{\e _1}_y}\|v_0\|_{H^{s_2}_xH^{\e _2}_y},
\end{equation}

provided $s_{1,2}$ and $\e _{0,1,2}$ fulfill the assumptions in Theorem \ref{mainest}. Especially for $s> \frac12$
and $\e > 0$ we have the linear estimate

\begin{equation}
\|e^{it\phi(D_x,D_y)}u_0\|_{L_t^4([0,1],L^4_{xy})}\ls \|u_0\|_{H^{s}_xH^{\e }_y}.
\end{equation}

The corresponding estimate for data $u_0$ defined on $\R^3$ holds global in time and
has a $\|u_0\|_{H^{\frac12}_xL^{2}_y}$ on the right hand side. It goes back to Ben Artzi and Saut \cite{BAS99}.
Dimensional analysis shows that the Sobolev exponent $s= \frac12$ is necessary. So we
haven't lost more than an $\e$ derivative in the $x$- as well as in the $y$-variable.

In order to prove Theorem \ref{mainest}, we will work in Fourier space, where the product
$uv$ is turned into the convolution

$$\widehat{u}*\widehat{v}(\xi,\tau)=\int d \tau_1 \sum_{\xi_1 \in \Z^3}\widehat{u}(\xi_1,\tau_1)\widehat{v}(\xi_2,\tau_2).$$

Here always $(\xi,\tau)=(k,\eta,\tau)=(k_1+k_2,\eta_1+\eta_2,\tau_1+\tau_2)=(\xi_1+ \xi_2,\tau_1+\tau_2)$.
Observe that there is no contribution to the above sum, whenever $k_1=0$ or $k_2=0$. In
the estimation of such convolutions the $\sigma$-weights in the $X_{s,\e ,b}$-norms
become $\sigma_1 = \tau_1 - \phi(\xi_1)$ and $\sigma_2 = \tau_2 - \phi(\xi_2)$. With
this notation we introduce the bilinear Fourier multiplier $M^{- \e}$, which we define by

$$\F M^{- \e} (u,v)(\xi,\tau):=\chi_{\{k \ne 0\}}\int d \tau_1 \sum_{\xi_1 \in \Z^3}\lb k_1\eta -k \eta_1\rb^{- \e}\widehat{u}(\xi_1,\tau_1)\widehat{v}(\xi_2,\tau_2).$$

Observe that $|k_1\eta -k \eta_1|=|k_1\eta_2 -k_2 \eta_1|$, so that we have symmetry between $u$ and $v$. The 
operator $M^{- \e}$ serves to compensate for the unavoidable loss of the $D_y^{\e}$ in \eqref{bil}.
A careful examination of the proof of Theorem \ref{mainest} will give the following. 

\begin{theorem} 
\label{Meps}
Let $s, b > \frac12$ and $\e > 0$. Then
\begin{equation}\label{Mepsbil}
\|M^{- \e} (u,v)\|_{L^2_{xyt}} \ls \|u\|_{X_{s,b}}\|v\|_{X_{s,b}}.
\end{equation}
\end{theorem}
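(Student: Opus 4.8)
The plan is to mirror the proof of Theorem~\ref{mainest}, repeating its reduction while retaining the sharper weight $\lb k_1\eta-k\eta_1\rb^{-\e}$ throughout, in place of the crude output weight $\lb\eta\rb^{-\e_0}$. First I would pass to Fourier variables, replace $\widehat u,\widehat v$ by their moduli (which only enlarges the left-hand side), and dualise against $w$ with $\|w\|_{L^2_{xyt}}=1$, so that it suffices to bound
\[
\int d\tau\, d\tau_1 \sum_{\xi,\,\xi_1}\chi_{\{k\ne0\}}\,\lb k_1\eta-k\eta_1\rb^{-\e}\,\widehat u(\xi_1,\tau_1)\,\widehat v(\xi_2,\tau_2)\,\overline{\widehat w(\xi,\tau)}
\]
by $\|u\|_{X_{s,b}}\|v\|_{X_{s,b}}$, with $(\xi,\tau)=(\xi_1+\xi_2,\tau_1+\tau_2)$ as always. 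The geometry is controlled by the resonance identity
\[
\sigma-\sigma_1-\sigma_2=\bigl(\phi_0(k_1)+\phi_0(k_2)-\phi_0(k)\bigr)-\frac{|k_1\eta-k\eta_1|^2}{k_1k_2k},
\]
whose quadratic part reveals that $k_1\eta-k\eta_1=k_1\eta_2-k_2\eta_1$ equals, up to the factor $k_1k_2$, the separation $\frac{\eta_1}{k_1}-\frac{\eta_2}{k_2}$ of the $y$-group velocities of the two Schr\"odinger waves furnished by the Schr\"odinger trick. Thus $M^{-\e}$ damps precisely the non-transversal interactions, for which no bilinear Strichartz gain is available. Since $\phi_0$ is arbitrary, the $\phi_0$-term must be discarded and the weight recovered from the Schr\"odinger structure alone.

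The engine is Bourgain's periodic $L^4$-estimate and its bilinear refinement. Fixing the output frequency $k$ and a splitting $k=k_1+k_2$, taking $\F_x$ turns each factor into a free Schr\"odinger evolution on $\T^2$ with time rescaled by $\frac1{k_j}$; using $b>\frac12$ I would reduce the $X_{s,b}$-norms to such free evolutions. Computing the resulting $\|\cdot\|^2_{L^2_{yt}}$ by Plancherel in $(t,y)$ fixes, for each value of $\tau$, the quantity $\mu:=\frac{|\eta_1|^2}{k_1}+\frac{|\eta_2|^2}{k_2}$, and completing the square gives
\[
\mu-\frac{|\eta|^2}{k}=\frac{|k_1\eta-k\eta_1|^2}{k_1k_2k},
\]
so that $\eta_1$ is constrained to a circle centred at the rational point $\frac{k_1\eta}{k}$ on which $|k_1\eta-k\eta_1|$ is essentially constant, of size $R\sim\frac{|k_1\eta-k\eta_1|}{|k|}$. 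By the divisor bound the number of lattice points $\eta_1\in\Z^2$ on such a circle is $\ls_\e R^\e$, uniformly in the centre; after a Cauchy--Schwarz in $\eta_1$ this produces a bilinear Schr\"odinger bound carrying the loss $\lb k_1\eta-k\eta_1\rb^{\e}$, which is exactly cancelled by the weight in $M^{-\e}$. That $\e>0$ may be taken arbitrarily small is what renders Bourgain's $\e$-loss harmless.

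It then remains to sum. The modulation sum is handled by $b>\frac12$ (a geometric series in the dyadic scales $\lb\sigma\rb,\lb\sigma_1\rb,\lb\sigma_2\rb$, the largest of which dominates the resonance), while the sum over the splitting $k=k_1+k_2$ uses $s>\frac12$ through $\sum_{k_1}|k_1|^{-s}|k-k_1|^{-s}\ls 1$; a final Cauchy--Schwarz in $(k,\eta,\tau)$ restores $\|w\|_{L^2_{xyt}}$ and closes the estimate.

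The principal obstacle is the counting step, i.e.\ the periodic bilinear Schr\"odinger estimate itself: one must bound the number of lattice points on circles centred at the rationals $\frac{k_1\eta}{k}$ uniformly in the centre and the radius, and verify that the gain is genuinely measured by $\lb k_1\eta-k\eta_1\rb$ and not merely by $\lb\eta\rb$. Special care is required in the near-collinear regime $k_1\eta\approx k\eta_1$, where the circle degenerates, the transversality vanishes and $M^{-\e}$ offers no help; there the lattice is effectively one-dimensional and one closes the estimate by the one-dimensional periodic $L^4$-bound using the $x$-frequency weights alone. Carrying the refined weight $\lb k_1\eta-k\eta_1\rb^{-\e}$ consistently through all of these cases is precisely the ``careful examination'' of the proof of Theorem~\ref{mainest} alluded to above.
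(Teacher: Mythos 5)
Your overall strategy is the paper's: reduce to a bilinear convolution estimate at fixed $(k,k_1)$, use the completed square $\frac{|\eta_1|^2}{k_1}+\frac{|\eta_2|^2}{k_2}-\frac{|\eta|^2}{k}=\frac{|k\eta_1-k_1\eta|^2}{kk_1k_2}$ to localise $\eta_1$ near circles centred at $\frac{k_1}{k}\eta$, count lattice points with an $\e$-loss that the weight in $M^{-\e}$ absorbs, and then sum over $k_1$ and the modulations using $s,b>\frac12$; this is exactly how the paper arrives at \eqref{2.12} and \eqref{2.13}. The genuine gap is the counting input. Because $b>\frac12$ only gives integrability of $\lb\sigma\rb^{-2b}$, the Schwarz/Cauchy--Schwarz step forces a sum over the unit-width annuli $r\le|\eta_1-\frac{k_1}{k}\eta|^2<r+1$, $r\in\N$, not over exact circles: it is the spacing $\frac{k}{k_1k_2}$ of the resulting progression in $\sigma$ that produces the factor $\frac{|k_1k_2|}{|k|}$, whose square root combines with the $k_1$-sum to give precisely $s>\frac12$. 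For these annuli one needs the bound $\ls_{\e}r^{\e}$ \emph{uniformly in the centre}, and this does not follow from the divisor bound: scaling by $k$ turns one annulus into a union of $\sim k^2$ integer circles and yields only $\ls k^{2}(k^{2}r)^{\e}$, a loss the $M^{-\e}$ weight cannot repair (you would end up needing $s>\frac32$). The uniform annulus count is exactly Lemma \ref{diophant} of the paper and requires its own argument (congruence conditions mod $4$ for half-integer centres, iterated to dyadic rationals, then approximation of an arbitrary centre to within $r^{-1/2}$). You correctly single out the counting as ``the principal obstacle'', but asserting it ``by the divisor bound'' leaves the key lemma unproved, and the naive reduction to the divisor bound fails quantitatively.

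Two smaller remarks. The quantity actually saved by Lemma \ref{diophant} is $r^{\e}\simeq|\eta_1-\frac{k_1}{k}\eta|^{2\e}\le\lb k\eta_1-k_1\eta\rb^{2\e}$, which is why the weight of $M^{-\e}$ is formulated in terms of $k\eta_1-k_1\eta$; your bookkeeping of the cancellation is consistent with this. On the other hand, the separate treatment you propose for the near-collinear regime $k_1\eta\approx k\eta_1$ is unnecessary: there the relevant annulus is the unit disc about $\frac{k_1}{k}\eta$, which contains $O(1)$ lattice points, so the same count closes the estimate and no one-dimensional $L^4$ argument is needed. Note also that, as the paper points out, once $M^{-\e}$ is inserted the sign reduction on $k,k_1,k_2$ used in the proof of Theorem \ref{central} is no longer available, so the $r$-sum and the $k_1$-sum must be (and are) carried out without it.
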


The proof of the above theorems will be done in section 2, while section 3 is
devoted to the applications. Here we specialize to the dispersion generalised KP-II equation
\eqref{KPII}, that is to $\phi_0(k)=|k|^{\alpha}k$. For $\alpha =2$, which is the original KP-II equation we will use Theorem
\ref{mainest} to show the following local result.

\begin{theorem} 
\label{local}
Let $s \ge \frac12$ and $\e > 0$. Then, for $\alpha =2$, the Cauchy problem \eqref{KPII}
is locally well-posed for data $u_0 \in H^{s}_xH^{\e }_y(\T^3)$ satisfying the mean zero
condition \eqref{meanzero}.
\end{theorem}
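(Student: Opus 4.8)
The plan is to run the standard Bourgain-space contraction argument, the whole point being to feed Theorem \ref{mainest} into the quadratic term. For $\alpha=2$ we have $\phi_0(k)=k^3$, so $\phi(\xi)=k^3-\frac{|\eta|^2}{k}$. Writing \eqref{KPII} in Duhamel form, using $u\pd_x u=\frac12\pd_x(u^2)$, and inserting a time cut-off $\psi(\frac{t}{T})$, I would look for a fixed point of
\begin{equation*}
\Phi(u)(t)=\psi(\tfrac{t}{T})e^{it\phi(D_x,D_y)}u_0-\tfrac12\,\psi(\tfrac{t}{T})\int_0^t e^{i(t-t')\phi(D_x,D_y)}\pd_x(u^2)(t')\,dt'
\end{equation*}
in a ball of the mean-zero subspace of $X_{s,\e,b}$ with $b>\frac12$ close to $\frac12$. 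Since the nonlinearity is a pure $x$-derivative, the mean-zero condition $\widehat{u}(0,\eta,\tau)=0$ required by \eqref{meanzero} (and needed to make sense of $\pd_x^{-1}$) is automatically preserved by $\Phi$, so the iteration stays in the correct space.

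The two linear ingredients are routine and I would quote them from the standard $X_{s,b}$-machinery (Bourgain, Ginibre--Tsutsumi--Velo): the homogeneous estimate $\|\psi(\frac{t}{T})e^{it\phi(D_x,D_y)}u_0\|_{X_{s,\e,b}}\ls\|u_0\|_{H^s_xH^\e_y}$, and the inhomogeneous Duhamel estimate, which gains a positive power $T^{1-b-b'}$ provided the nonlinearity is controlled in $X_{s,\e,-b'}$ for some $b'<\frac12$ with $b+b'<1$. The entire content is therefore the nonlinear estimate
\begin{equation*}
\|\pd_x(uv)\|_{X_{s,\e,-b'}}\ls\|u\|_{X_{s,\e,b}}\|v\|_{X_{s,\e,b}},\qquad s\ge\tfrac12,\ \e>0,
\end{equation*}
which I would derive from Theorems \ref{mainest} and \ref{Meps} by duality, distributing $|k|^s\ls|k_1|^s+|k_2|^s$ and $\lb\eta\rb^\e\ls\lb\eta_1\rb^\e\lb\eta_2\rb^\e$ across the two factors and using the multiplier $M^{-\e}$ to absorb the $\e$-loss in $y$ on whichever factor carries no spare $y$-regularity.

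The main obstacle is concentrated entirely in this estimate and is twofold: the symbol of $\pd_x$ costs a full $x$-derivative $|k|$, and at the endpoint $s=\frac12$ one has $s_1+s_2=2s=1$, which just fails the strict hypothesis $s_1+s_2>1$ of Theorem \ref{mainest}. Both deficits are recovered from the resonance identity: with $\phi(\xi)=k^3-\frac{|\eta|^2}{k}$ a direct computation gives
\begin{equation*}
\sigma-\sigma_1-\sigma_2=-3k_1k_2k-\frac{|k_1\eta_2-k_2\eta_1|^2}{k_1k_2k},
\end{equation*}
so that $\max(\lb\sigma\rb,\lb\sigma_1\rb,\lb\sigma_2\rb)\gs|k_1k_2k|\ge|k|$, using that $k_1,k_2$ are nonzero integers (there is no contribution when $k_1=0$ or $k_2=0$), hence $|k_1|,|k_2|\ge 1$. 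Spending a suitable power of the largest modulation weight recovers the missing $x$-derivative and simultaneously opens the strict room beyond $s_1+s_2=1$ needed to apply Theorem \ref{mainest}; I expect the endpoint to force a separate treatment of high-high and high-low frequency interactions, and the dualized form \eqref{bil'}, with an $L^2$-factor, to be exactly the tool when the largest modulation sits on an input. The quantity $k_1\eta_2-k_2\eta_1$ appearing in the resonance is precisely the one weighted by $M^{-\e}$, which is how Theorem \ref{Meps} enters to control the $y$-loss without destroying the gain.

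Once this estimate is in place the argument closes in the usual way: for $T$ small, $\Phi$ maps a ball of $X_{s,\e,b}$ into itself and is a contraction, its fixed point is the desired local solution, and applying the same bilinear estimate to differences $u-\tilde u$ yields uniqueness in the ball together with the locally Lipschitz (indeed analytic) dependence of the solution on $u_0\in H^s_xH^\e_y(\T^3)$.
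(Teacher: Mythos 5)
Your overall architecture --- Duhamel formula, contraction in Bourgain spaces, the quadratic estimate as the entire content, and the resonance identity used to recover the $\pd_x$ --- matches the paper, and your resonance identity is the paper's \eqref{rr1}. But there is a genuine gap exactly at the endpoint $s=\frac12$, which is the case the theorem asserts. The modulation budget supplied by the resonance, $\max_i\lb\sigma_i\rb^{1/2}\gs (|k_{min}||k_{max}|^{2})^{1/2}$, is precisely consumed by the derivative count $|k|^{s+1}$ versus $|k_1|^{s}|k_2|^{s}$ at $s=\frac12$: in every frequency configuration the reduction lands exactly on $s_1+s_2=1$, with nothing left over to ``open the strict room beyond $s_1+s_2=1$'' as you claim. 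Worse, in your framework the strict inequalities $b>\frac12$ and $b'>-\frac12$ each cost an additional positive power of $|k_{max}|$, pushing you strictly below $s_1+s_2=1$, where Theorem \ref{mainest} is unavailable. So the estimate $\|\pd_x(uv)\|_{X_{s,\e,-b'}}\ls\|u\|_{X_{s,\e,b}}\|v\|_{X_{s,\e,b}}$ is not obtainable from Theorems \ref{mainest} and \ref{Meps} at $s=\frac12$; your scheme only yields the result for $s>\frac12$.

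The paper needs three further devices, all absent from your proposal. First, the endpoint $s_1+s_2=1$ of the bilinear $L^2$ estimate is obtained in Corollary \ref{varstr} by bilinear interpolation of Theorem \ref{mainest} with the cheap Sobolev--Young bound \eqref{x}; the price is that it only holds with some $b<\frac12$ on the inputs. Second, because of this the contraction must be run at the limiting exponents $b=\frac12$, $b'=-\frac12$, which forces the Ginibre--Tsutsumi--Velo auxiliary norms $Y_{s,\e;\beta}$ and $Z_{s,\e;\beta}$ (the $L^2_{\xi}L^1_{\tau}$ piece) on the left of Lemma \ref{est0}; the factor $T^{\gamma}$ then comes from the time-localization estimate \eqref{time}, not from $b+b'<1$. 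Third, in the regime $|k|\ll|k_1|\sim|k_2|$ with the maximal modulation on an input factor, the dualized estimate \eqref{xx'} leaves a full $x$-derivative on the wrong (high-frequency) factor; this is repaired only by the additional Bourgain weight $\bigl(1+\lb\sigma\rb\lb k\rb^{-3}\bigr)^{1/2}$ built into the solution space $X_{s,\e,\frac12;\frac12}$ (subcase b.b of the proof of Lemma \ref{est0}), a weight your iteration space does not carry. A smaller inaccuracy: the multiplier $M^{-\e}$ and Theorem \ref{Meps} are not used for $\alpha=2$; the $\e$-loss in $y$ is absorbed simply by distributing $\lb\eta\rb^{\e}$ onto the two factors, both of which carry $H^{\e}_y$ regularity, and Theorem \ref{Meps} enters only in the high-dispersion result, Theorem \ref{global}.
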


For high dispersion, i. e. $\alpha > 3$, one can allow $s<0$ and $\e =0$. In fact, by the
aid of Theorem \ref{Meps} we can prove:

\begin{theorem}
\label{global}
Let $3 < \alpha \le 4$ and $s > \frac{3-\alpha}{2}$. Then the Cauchy problem \eqref{KPII}
is locally well-posed for data $u_0 \in H^{s}_xL^{2 }_y(\T^3)$ satisfying \eqref{meanzero}.
If $s\ge 0$ the corresponding solutions extend globally in time by the conservation of
the $L^{2 }_{xy}$-norm.
\end{theorem}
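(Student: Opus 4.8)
The plan is to prove Theorem~\ref{global} by the standard contraction-mapping scheme in the Bourgain space $\X{}{s,0}{b}$ (i.e.\ $\e=0$), applied to the Duhamel formulation of \eqref{KPII} with $\phi_0(k)=|k|^\alpha k$. The only genuinely new analytic input is the nonlinear estimate, which controls the quadratic term $u\partial_x u = \frac12 \partial_x(u^2)$; everything else (the linear propagator and Duhamel terms, the time-cutoff gain of a small power of the lifespan, and the abstract fixed-point argument on a ball) is by now routine and I would cite it or relegate it to a lemma. So the heart of the matter is to establish, for $3<\alpha\le 4$ and $s>\frac{3-\alpha}2$, a bound of the form
\begin{equation}\label{Duhamel-est}
\|\partial_x(uv)\|_{\X{}{s,0}{b'-1}} \ls \|u\|_{\X{}{s,0}{b}}\,\|v\|_{\X{}{s,0}{b}}
\end{equation}
for suitable $b>\frac12>b'$ close to $\frac12$, since $\partial_x$ produces a factor $|k|=|k_1+k_2|$ in Fourier space.

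First I would reduce matters to a purely multiplier estimate on the Fourier side. Writing out the $\X{}{s,0}{b}$-norms and dualizing, \eqref{Duhamel-est} becomes an estimate on $\|uv\|_{\X{}{-s,0}{-b}}$ against $\|u\|_{\X{}{s,0}{b}}\|v\|_{\X{}{s,0}{b}}$ after the weight $|k|\langle k\rangle^{-1}$ and the $\sigma$-weights are accounted for. The strategy is to exploit the \emph{resonance identity} for the KP-phase: since $\sigma=\tau-\phi_0(k)+\frac{|\eta|^2}k$, the quantity $\sigma-\sigma_1-\sigma_2$ is an explicit algebraic expression in the frequencies. Its $\eta$-part reproduces exactly the Schr\"odinger resonance $\frac{|\eta|^2}k-\frac{|\eta_1|^2}{k_1}-\frac{|\eta_2|^2}{k_2}$, which (as in the introduction's ``Schr\"odinger trick'') is comparable to the square of the key quantity $|k_1\eta-k\eta_1|$ divided by $|kk_1k_2|$, while its $k$-part, $\phi_0(k)-\phi_0(k_1)-\phi_0(k_2)$ with $\phi_0(k)=|k|^\alpha k$, grows like $|k|^{\alpha}$ in the worst region. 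The plan is to split into the usual three regions according to which of $\langle\sigma\rangle,\langle\sigma_1\rangle,\langle\sigma_2\rangle$ dominates the modulation, and in each region transfer derivatives onto the largest modulation factor.

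The main obstacle, and the reason the hypothesis $s>\frac{3-\alpha}2$ enters, is the high-high-to-low interaction in $k$: when $|k_1|\sim|k_2|\sim N$ are large and $|k|=|k_1+k_2|$ is comparatively small, the output gains a full $\partial_x$ but loses $2s$ derivatives relative to the two inputs. Here the gain must come from the $k$-part of the resonance, which is of size $\gtrsim |k|\,N^{\alpha}$ when $|k|\gtrsim 1$ (and one must treat the near-cancellation regime $|k|\ll N$ separately). Since the largest modulation is at least a fixed fraction of this resonance, a factor $\langle\sigma_{\max}\rangle^{b}$ with $b>\frac12$ buys back roughly $N^{\alpha/2}$, and balancing this against the derivative loss produces precisely the constraint $s>\frac{3-\alpha}2$ together with the upper bound $\alpha\le 4$ (beyond which the $\eta$-summation, handled by Cauchy--Schwarz exactly as in the proof of Theorem~\ref{Meps}, is what caps the usable dispersion). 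The second, genuinely two-dimensional-in-$\eta$ difficulty is the summation over $\eta_1\in\Z^2$ at fixed $k,\eta,k_1$; this is dispatched by the same counting/Cauchy--Schwarz argument underlying Theorem~\ref{Meps}, where the multiplier $M^{-\e}$ was introduced precisely to absorb the $\langle k_1\eta-k\eta_1\rangle$ factor, so I would quote Theorem~\ref{Meps} (with $\e=0$ in the benign regions and a small $\e$ extracted from the modulation gain in the dangerous ones) rather than redo the lattice-point count.

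Once \eqref{Duhamel-est} is in hand, the contraction argument is standard: on the time interval $[0,T]$ one sets up the map $u\mapsto \psi(t)e^{it\phi(D_x,D_y)}u_0-\psi_T(t)\int_0^t e^{i(t-t')\phi(D_x,D_y)}\,\partial_x\big(\tfrac12 u^2\big)(t')\,dt'$ with smooth cutoffs $\psi,\psi_T$, uses the linear and Duhamel bounds in $\X{}{s,0}{b}$ to gain a small positive power $T^{\theta}$ from the difference $b-b'$, and closes the fixed point on a small ball for $T$ small. Uniqueness and smooth dependence on the data follow from the same bilinear estimate applied to differences. Finally, for $s\ge 0$ the local solution extends globally because the $L^2_{xy}$-norm is conserved by \eqref{KPII} and the mean-zero condition \eqref{meanzero} is propagated, so iterating the local theory on time steps of uniform length yields a global solution.
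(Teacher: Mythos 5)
Your overall architecture (contraction mapping in a Bourgain space, a bilinear estimate driven by the resonance identity, Theorem \ref{Meps} to handle the $\eta_1$-summation) matches the paper's, and you correctly locate one source of the threshold $s>\frac{3-\alpha}{2}$ in the high-high-to-low interaction in $k$. But there are two concrete gaps. First, there are no ``benign regions'' in which you may quote Theorem \ref{Meps} ``with $\e=0$'': no bilinear $L^2$ estimate without a loss in the $y$-variable is available on $\T^3$ (Theorem \ref{mainest} requires $\e_0+\e_1+\e_2>0$, the loss coming from the lattice-point count in Lemma \ref{diophant}, which is genuinely there). Since the data space is $H^s_xL^2_y$, this loss cannot be absorbed anywhere; in \emph{every} frequency region one must insert the multiplier $M^{-\e}$ with $\e>0$ and then pay for its removal using the mixed term $|k\eta_1-k_1\eta|^2/(kk_1k_2)$ of the resonance identity \eqref{rr1}, at the cost of small powers of the modulations and of $D_x$ --- this is exactly how the paper deduces \eqref{nonlin2} from \eqref{nonlin1}, and it is the mechanism singled out in Remark 3 of the introduction. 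Treating $M^{-\e}$ as a device for the ``dangerous'' regions only misses the central point of the argument.

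Second, you work throughout in the plain spaces $X_{s,b}$, whereas the paper's Lemma \ref{est1} is formulated and proved in the weighted spaces $X_{s,b;\beta}$ with $\beta=\frac{s-b'}{\alpha}>0$, i.e.\ with Bourgain's extra weight $\bigl(1+\lb\sigma\rb/\lb k\rb^{\alpha+1}\bigr)^{\beta}$. This weight is used essentially: on the left-hand side in the regime $\lb\sigma\rb\ge\lb k\rb^{\alpha+1}$ (Case a of the paper's proof), where it converts $x$-derivatives on the output into additional powers of the output modulation; and on the right-hand side in the low-high interaction $|k_1|\ll|k|\sim|k_2|$ with $\sigma_1$ dominant (Subsubcase triple b), where it shifts $\alpha\beta$ derivatives from the high-frequency factor onto the low-frequency one. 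You single out high-high-to-low as ``the main obstacle'' and do not address the low-high interaction with the dominant modulation sitting on the low-frequency factor, which is the classical KP-II difficulty for which plain $X_{s,b}$ bilinear estimates are known to be delicate; your sketch gives no argument that the fixed point closes without the weight. Two smaller points: the exponent $b'-1<-\frac12$ in your target estimate is too small for the Duhamel bound, which needs the nonlinearity in $X_{s,c}$ with $c>-\frac12$ (the paper uses $X_{s,b';\beta}$ with $b'>-\frac12$ directly); and the restriction $\alpha\le4$ is a matter of optimality of the range of $s$ (cf.\ Remark 4), not of the $\eta$-summation.
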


More precise statements of the last two theorems will be given in section 3. We conclude
this introduction with several remarks commenting on our well-posedness results and their context.

1. Concerning the Cauchy problem for the KP-II equation and its dispersion generalisations
on $\R^2$ and $\R^3$ there is a rich literature, see e. g. \cite{HT}, \cite{H},
\cite{ILM}, \cite{IM01}, \cite{S93}, \cite{ST99}, \cite{T&T01},\cite{T99}, this list ist by no means
exhaustive. For $\alpha = 2$ the theory has even been pushed to the critical space in
a recent work of Hadac, Herr, and Koch \cite{HHK}. On the other hand, for the periodic or semiperiodic
problem the theory is much less developed. Besides Bourgain's seminal paper \cite{B93}
our only references here are the papers \cite{ST00}, \cite{ST01} of Saut and Tzvetkov
and our own contribution \cite{GPS08} joint with M. Panthee and J. Silva.

2. The results obtained here for the fully periodic case are as good as those in
\cite{GPS08} for the $\T \times \R^2$ case and even as those obtained by Hadac
\cite{HT} for $\R^3$, which are optimal by scaling considerations. We believe this is
remarkable since apart from nonlinear wave and Klein-Gordon equations there are
only very few examples in the literature, where the periodic problem is as well behaved
as the corresponding continuous case. (One example is of course Bourgain's $L^2_x(\T)$
result for the cubic Schr\"odinger equation \cite{B93a}, but this is half a derivative
away from the scaling limit.) On the other hand there are many examples, such as KdV
and mKdV, where at least the methods applied here lead to (by $\frac14$ derivative)
weaker results for the periodic problem. Another example is the KP-II equation itself in
two space dimensions, where in \cite{GPS08} we lost $\frac14$ derivative when stepping
from $\R^2$ to $\T \times \R$. Another loss of $\frac14$ derivative in the step from
$\T \times \R$ to $\T^2$ is probable.

3. For the semilinear Schr\"odinger equation

$$iu_t + \Delta u = |u|^pu$$

on the torus, with $2<p<4$ in one, $1<p<2$ in two dimensions, one barely misses
the conserved $L_x^2$ norm and thus cannot infer global well-posedness. The reason
behind that is the loss of an $\e$ derivative in the Strichartz type estimates in
the periodic case. A corresponding derivative loss is apparent in Theorem \ref{mainest}
but the usually ignored mixed part of the rather comfortable resonance relation of the
dispersion generalised KP-II equation allows (via $M^{-\e}$) to compensate for this
loss, so that for high dispersion ($3 < \alpha \le 4$) we can obtain something global.
The author did not expect that, when starting this investigation.

4. We restrict ourselves to the most important (as we believe) values of $\alpha$.
Our arguments work as well for $\alpha \in (2,3]$ with optimal lower bound for $s$
but possibly with an $\e$ loss in the $y$ variable. For $\alpha > 4 $ we probably
loose optimality.

5. In \cite{T&T} Takaoka and Tzvetkov proved a time localised $L^4-L^2$ Strichartz
type estimate \emph{without} derivative loss for free solutions of the Schr\"odinger
equation with data defined on $\R \times \T$. Inserting their arguments in our proof
of Theorem \ref{mainest} we can show a variant thereof with $\e_0=\e_1=\e_2=0$, if
the data live on $\T \times \R \times \T$. Consequently our well-posedness results
are valid in this case, too.

\section{Proof of Theorem \ref{mainest}}

The main ingredient in the proof of Bourgain's Schr\"odinger estimate

$$\|e^{it \Delta}u_0\|_{L^4_{xt}(\T ^3)} \ls \|u_0\|_{H^{\e}_x(\T ^2)}, \quad (\e > 0)$$

is the well known estimate on the number of representations of an integer $r>0$ as a
sum of two squares: For any $\e > 0$ there exists $c_{\e}$ such that

\begin{equation}\label{338}
\# \{\eta \in \Z ^2 : |\eta|^2=r\} \le c_{\e}r^{\e}.
\end{equation}

For \eqref{338}, see \cite[Theorem 338]{HW}. Our proof of Theorem \ref{mainest} relies
on the following variant thereof.

\begin{lemma}\label{diophant}
Let $r \in \N$, $\delta \in \R^2$. Then for any $\e > 0$ there exists $c_{\e}$, independent of $r$ and $\delta$, such that
\begin{equation}\label{2.3}
\# \{\eta \in \Z ^2 : r \le |\eta - \delta|^2 <r+1\} \le c_{\e}r^{\e}.
\end{equation}
\end{lemma}

\begin{proof}
In the case where $\delta \in \Z ^2$, this follows by translation from \eqref{338}.
So we may assume $\delta \in [0,1]^2$, and we start by considering the special case
$\delta = (\frac12, \frac12)$. Here 

\begin{eqnarray*}
 & \# \{\eta \in \Z ^2 : r \le |\eta - \delta|^2 <r+1\}  \\
 = & \# \{\eta \in \Z ^2 : 4 r \le |2\eta - 2\delta|^2 < 4(r+1)\} \\
 = & \sum_{l=4r}^{4r+3} \# \{\eta \in \Z ^2 :   |2\eta - 2\delta|^2 =l\}.
\end{eqnarray*}

But $|2\eta - 2\delta|^2 = 4|\eta|^2-4\lb \eta , 2 \delta \rb + 2 \equiv 2 \quad (\rm{mod}\, 4)$,
so the only contribution to the above sum comes from $l=4r+2$. Thus, by \eqref{338}, for
any $\e ' >0$ there exists $c_{\e '}$ such that

\begin{equation}\label{2.4}
\# \{\eta \in \Z ^2 : r \le |\eta - \delta|^2 <r+1\}
\le c_{\e '}(4r+2)^{\e '} \le c_{\e '}(6r)^{\e '}.
\end{equation}

Next we observe that for $\delta \in \{(0, \frac12),(\frac12, 0),(\frac12, 1),(1, \frac12)\}$
we have $|2\eta - 2\delta|^2 \equiv 1 \quad (\rm{mod} \, 4)$, so that the estimate \eqref{2.4} is valid
in these cases, too. Iterating the argument, we obtain for $\delta = (\frac{m_1}{2^m},\frac{m_2}{2^m})$
with $m \in \N$ and $0 \le m_{1,2} \le 2^m$ the estimate

\begin{equation}\label{2.5}
\# \{\eta \in \Z ^2 : r \le |\eta - \delta|^2 <r+1\}\le c_{\e '}(6^mr)^{\e '}.
\end{equation}

Now for an arbitrary $\delta \in [0,1]^2$ we choose $\delta ' = (\frac{m_1}{2^{m'}},\frac{m_2}{2^{m'}})$
with $|\delta  - \delta '| \sim r^{- \frac12}$, so that

$$\{\eta \in \Z ^2 : r \le |\eta - \delta|^2 <r+1\} \subset \{\eta \in \Z ^2 : r-1 \le |\eta - \delta|^2 <r+2\}$$

and hence, by \eqref{2.5},

$$\# \{\eta \in \Z ^2 : r \le |\eta - \delta|^2 <r+1\}\le 3c_{\e '}(6^{m'}r)^{\e '}.$$

Such a $\delta $ exists for $2^{m'} \sim r^{\frac12}$, estimating roughly, for $6^{m'} \le r^{\frac32}$.
So we have the bound

$$\# \{\eta \in \Z ^2 : r \le |\eta - \delta|^2 <r+1\}\le 3c_{\e '}r^{\frac{5\e '}{2}}.$$

Choosing $\e ' = \frac{2 \e}{5}$, $c_{\e }= 3 c_{\e '}$, we obtain \eqref{2.3}.

\end{proof}

\begin{kor}
\label{elephant}
If $B$ is a disc (or square) of arbitrary position and of radius (sidelength) $R$,
then for any $\e > 0$ there exists $c_{\e}$ such that
\begin{equation}\label{2.6}
\sum_{\substack{\eta_1 \in \Z ^2 \\ r \le |\eta_1 - \delta|^2 <r+1}}\chi_B (\eta_1) \le c_{\e }R^{\e }.
\end{equation}
\end{kor}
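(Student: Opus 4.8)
The plan is to play the global count of Lemma \ref{diophant} against an elementary curvature estimate, splitting the range of $R$ at $\sqrt r$. First I would reduce to squares (a disc and a square of the same $R$ are contained in one another up to a factor $\sqrt2$) and, enlarging $B$ if necessary, assume $R\ge1$, so that $R^\e\ge1$. In the easy regime $R^2\gs r$ there is nothing to do: the left side of \eqref{2.6} is at most the count of the entire annulus, so Lemma \ref{diophant} gives $\ls c_{\e'}r^{\e'}\ls R^{2\e'}$, and $\e'=\frac\e2$ finishes this case.

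The real work is the regime $R^2<r$, in which $B$ meets $\{r\le|\eta-\delta|^2<r+1\}$ in a thin, almost flat arc of radial width $\sim r^{-1/2}$; here Lemma \ref{diophant} alone is worthless, since $r$ may be huge compared with $R$, and the localisation to $B$ must be used. I would order the points of $B\cap\{r\le|\eta-\delta|^2<r+1\}$ by their angle about $\delta$ and examine three consecutive ones $P_{j-1},P_j,P_{j+1}$, setting $d_j=|P_{j-1}-P_{j+1}|\le\sqrt2\,R$. Because all three lie in the annulus, the distance of $P_j$ from the chord $P_{j-1}P_{j+1}$ is at most the sagitta of the subarc plus the annulus width, hence $\ls d_j^2r^{-1/2}+r^{-1/2}$. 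A nondegenerate lattice triangle has area $\ge\frac12$, so $\frac12\ls d_j(d_j^2r^{-1/2}+r^{-1/2})$, i.e. $d_j^3+d_j\gs\sqrt r$ and therefore $d_j\gs r^{1/6}$. Since the angular coordinate increases monotonically along the list with total variation $\ls R$, this forces the number of points to be $\ls Rr^{-1/6}+1$; the collinear triples are harmless, as a line meets the thin annulus only in segments of length $O(1)$ and so clusters at most $O(1)$ points.

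It remains to combine $\#\ls c_{\e'}r^{\e'}$ with $\#\ls Rr^{-1/6}+1$. If $r\ge R^6$ the second bound gives $\#\ls1\ls R^\e$; if $r<R^6$ the first, with $\e'=\frac\e6$, gives $\#\ls c_{\e/6}r^{\e/6}<c_{\e/6}R^\e$; taking the larger of the two constants proves \eqref{2.6}. I expect the middle paragraph to be the only genuine obstacle: one must set up the ordering by angle and the chord/sagitta bound uniformly in the real centre $\delta$, and rule out the degenerate configurations. The precise exponent $\frac16$ is irrelevant --- any fixed positive power of $r$ in the denominator would suffice to beat Lemma \ref{diophant} once $r$ is large.
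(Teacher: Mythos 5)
Your argument is correct and follows essentially the same route as the paper: both split at the threshold $R\sim r^{1/6}$, use Lemma \ref{diophant} when $R$ is large relative to $r^{1/6}$, and otherwise exploit that a short arc of the thin annulus contains $O(1)$ lattice points. The only difference is that the paper simply cites Lemma 4.4 of \cite{dSPST} for this last fact, whereas you reprove it via the standard chord--sagitta/lattice-triangle-area argument (your treatment of collinear runs, via the $O(1)$-length intersection of a line with the annulus, fills the one point left informal in your sketch).
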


\begin{proof}
If $R \gs r^{\frac16}$, the estimate \eqref{2.6} follows from Lemma \ref{diophant}.
If $R \ll r^{\frac16}$, there are at most two lattice points on the intersection of $B$
with the circle of radius $ \simeq r^{\frac12}$ around $\delta$, by Lemma 4.4 of \cite{dSPST}.
\end{proof}

In the sequel we will use the following projections: For a subset $M \subset \Z^2$ we define
$P_M$ by $\F P_Mu(k,\eta,\tau)= \chi_M(\eta)\F u(k,\eta,\tau)$. Especially, if $M$ is a
ball of radius $2^l$ centered at the origin, we will write $P_l$ instead of $P_M$. Furthermore
we have $P_{\Delta l}=P_l - P_{l-1}$, and the $P$-notation will also be used in connection
with a sequence $\{Q_{\alpha}^l\}_{\alpha \in \Z^2}$ of squares of sidelength $2^l$, centered
at $2^l\alpha$. Double sized squares with the same centers will be denoted by $\widetilde{Q}_{\alpha}^l$.

\begin{theorem}
\label{central}
Let $s>1$, $b > \frac12$ and $\e >0$. Then for a disc (or square) $B$ of arbitrary position with
radius (sidelength) $R$ we have
\begin{equation}\label{2.7}
\| (P_Bu)v\|_{L^2_{xyt}} \ls R^{\e} \|u\|_{X_{0,b}}\|v\|_{X_{s,b}}.
\end{equation}
\end{theorem}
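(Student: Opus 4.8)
The plan is to argue entirely on the Fourier side. By Plancherel, $\|(P_Bu)v\|_{L^2_{xyt}}$ is the $L^2_{\xi,\tau}$-norm of the convolution $\widehat{P_Bu}*\widehat v$. Writing $f=\lb\sigma_1\rb^b\,\widehat{P_Bu}$ and $g=|k_2|^s\lb\sigma_2\rb^b\,\widehat v$, so that $\|f\|_{L^2}\le\|u\|_{X_{0,b}}$ and $\|g\|_{L^2}=\|v\|_{X_{s,b}}$, this becomes the $L^2$-norm of a weighted convolution of $f$ and $g$ against the kernel $\chi_B(\eta_1)\lb\sigma_1\rb^{-b}|k_2|^{-s}\lb\sigma_2\rb^{-b}$. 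A Cauchy--Schwarz in the convolution variable $(\xi_1,\tau_1)$ reduces the generic situation to the uniform bound
\[
\sup_{\xi,\tau}\ \int d\tau_1\sum_{\xi_1}\frac{\chi_B(\eta_1)}{\lb\sigma_1\rb^{2b}\,|k_2|^{2s}\,\lb\sigma_2\rb^{2b}}\ \ls\ R^{2\e}.
\]
Since $b>\frac12$, I would first integrate out $\tau_1$ by the elementary estimate $\int\lb\sigma_1\rb^{-2b}\lb\sigma_2\rb^{-2b}\,d\tau_1\ls\lb\sigma_1+\sigma_2\rb^{-2b}$, reducing to a spatial sum weighted by $\lb\sigma_1+\sigma_2\rb^{-2b}$. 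The key point is that, for fixed $k_1,k_2$ and fixed output, $\sigma_1+\sigma_2=\tau-\phi(\xi_1)-\phi(\xi_2)$ is the inhomogeneous quadratic form $\frac{k}{k_1k_2}\bigl|\eta_1-\frac{k_1}{k}\eta\bigr|^2+\mathrm{const}$ in $\eta_1$, whose level sets are circles centred at $\frac{k_1}{k}\eta$. Decomposing dyadically in the size of $\lb\sigma_1+\sigma_2\rb$ and invoking Corollary \ref{elephant} to count the $\eta_1\in B$ on each resonance annulus yields the factor $R^\e$; the dyadic sum converges because $b>\frac12$, and the $k_1$-sum is controlled by the decay $|k_2|^{-2s}$ with $s>1$.

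The hard part is the degenerate regime in which $\frac{k}{k_1k_2}$ is small, i.e.\ the two $x$-frequencies nearly cancel ($|k_1k_2|\gg|k|$). There the quadratic form is almost flat, the resonance annuli are fat, and the plain Cauchy--Schwarz above overcounts badly: the displayed supremum can be as large as $R^{3-2s}$, which for $1<s<\frac32$ is far worse than the asserted $R^\e$. The loss stems from decoupling the $\eta_1$- and $k_1$-summations and passing to the single worst output, whereas the genuine output mass is spread out; testing on explicit examples confirms that the true bilinear norm is in fact dominated by its harmless diagonal, so the bound itself is correct and only the crude reduction is wasteful.

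To recover the sharp estimate I would retain the joint resonance constraint rather than pass to the lossy supremum, organising the $\eta$-frequencies by the decomposition set up before the theorem. Concretely, I would split $v$ (equivalently the output frequency $\eta$) dyadically by $P_{\Delta l}$ and into the squares $Q_\alpha^l$ of sidelength comparable to $R$; since the $\eta$-supports of the pieces $(P_Bu)\,P_{Q_\alpha^l}v$ lie in the finitely overlapping doubled squares $\widetilde Q_\alpha^l$, these pieces are almost orthogonal, giving
\[
\|(P_Bu)v\|_{L^2}^2\ \ls\ \sum_{\alpha}\|(P_Bu)\,P_{Q_\alpha^l}v\|_{L^2}^2 .
\]
On each piece both inputs are $\eta$-localised to a region of size $R$, which couples the surviving $k_1$-range to the annulus count of Corollary \ref{elephant} and removes the wasteful part of the $x$-frequency sum, so that the counting applies with the good constant; reassembling the squares costs nothing by orthogonality, while the regularity $s>1$ is exactly what absorbs the degenerate $x$-frequency interactions in the remaining sum over $k_1$. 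I expect this bookkeeping in the degenerate regime---matching the divisor bound of Corollary \ref{elephant} against the $x$-frequency decay through an orthogonal decomposition rather than a single supremum---to be the technical crux of the proof.
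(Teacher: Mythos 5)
Your Fourier-side setup, the reduction to a weighted counting bound via Cauchy--Schwarz and the $\tau_1$-integration, the completion of the square in $\eta_1$, and the application of Corollary \ref{elephant} on the annuli $r\le|\omega|^2<r+1$ all match the paper's argument (the paper performs the Cauchy--Schwarz in $k_1$ and in $(\eta_1,\tau_1)$ in two separate steps, but that is cosmetic). You also correctly locate the danger: when $|k_1k_2|\gg|k|$ the coefficient $\frac{k}{k_1k_2}$ of the quadratic form is small, the sum $\sum_{r\ge0}\lb a+\frac{k}{k_1k_2}r\rb^{-2b}$ costs $\frac{|k_1k_2|}{|k|}$ instead of $|k_{\min}|$, and the resulting $k_1$-sum diverges for $1<s\le\frac32$.

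However, your proposed repair does not close this gap, and you miss the one-line device that does. The degeneracy lives entirely in the $x$-frequencies: for a \emph{fixed} output $(k,\eta,\tau)$ with $|k|$ small, the interactions with $k_1\approx-k_2$ large already ruin the supremum, and they do so with $\eta_1\in B$ and $\eta$ confined to a single square of side $R$. Decomposing $v$ into the squares $Q^l_\alpha$ and invoking almost orthogonality therefore leaves the problematic sum over $k_1$ completely unchanged --- that decomposition is the tool used later for the $\e_0>0$ part of Theorem \ref{mainest}, not here. What actually eliminates the degenerate regime is the conjugation identity $\|(P_Bu)v\|_{L^2_{xyt}}=\|(P_Bu)\overline{v}\|_{L^2_{xyt}}$ combined with the standing assumption that $\phi_0$ is odd, which gives $\|\overline{v}\|_{X_{s,b}}=\|v\|_{X_{s,b}}$ (Bourgain's Remark 4.7 trick). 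Passing to $\overline{v}$ flips the sign of $k_2$, so one may assume $k_1$ and $k_2$ have the same sign, hence $0<|k_2|\le|k_1|<|k|$; then the step size of the arithmetic progression $\frac{k}{k_1k_2}r$ is at least $\frac1{|k_2|}$, the $r$-sum is bounded by $\frac{|k_1k_2|}{|k|}\le|k_2|$, and the $k_1$-sum converges precisely because $s>1$. With this sign reduction in place your global Cauchy--Schwarz already gives the supremum bound $\ls R^{2\e}$, and no orthogonality argument is needed; without it, no decomposition in the $\eta$-variables can rescue the estimate.
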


\begin{proof}
Choose $f,g$ with $\|f\|_{L^2_{\xi \tau}}=\|u\|_{X_{0,b}}$ and $\|g\|_{L^2_{\xi \tau}}=\|v\|_{X_{s,b}}$.
Then the left hand side of \eqref{2.7} becomes
\begin{equation}\label{2.8}
\|\int d\tau_1 \sum_{k_1\in\Z}\sum_{\eta_1 \in \Z^2}\chi_B (\eta_1)f(\xi_1,\tau_1)
\lb \sigma_1\rb^{-b}g(\xi_2,\tau_2)|k_2|^{-s}\lb \sigma_2\rb^{-b}\|_{L^2_{\xi \tau}}.
\end{equation}
Since $\| uv\|_{L^2_{xyt}}=\| u\overline{v}\|_{L^2_{xyt}}$, which corresponds to
$\|\widehat{u}*\widehat{v}\|_{L^2_{\xi \tau}}=\|\widehat{u}*\widehat{w}\|_{L^2_{\xi \tau}}$
on Fourier side, where $\widehat{w}(\xi, \tau)= \overline{\widehat{v}}(-\xi, -\tau)$,
and since the phase function $\phi$ is assumed to be odd, so that $\|u\|_{X_{s,b}}=\|\overline{u}\|_{X_{s,b}}$,
we may assume in the estimation on \eqref{2.8}, that $k_1$ and $k_2$ have the same sign,
cf. Remark 4.7 in \cite{B93}. So it's sufficient to consider $0 < |k_2| \le |k_1| < |k|$. Now,
using Minkowski's inequality we estimate \eqref{2.8} by

\begin{eqnarray*}
\|\sum_{k_1\in\Z}|k_2|^{-s}\|\int d\tau_1\sum_{\eta_1 \in \Z^2}\chi_B (\eta_1)
f(\xi_1,\tau_1)\lb \sigma_1\rb^{-b}g(\xi_2,\tau_2)\lb \sigma_2\rb^{-b}\|_{L^2_{\eta \tau}}\|_{L^2_k}\\
\le \||k_2|^{-\frac12}\|\int d\tau_1\sum_{\eta_1 \in \Z^2}\chi_B (\eta_1)
f(\xi_1,\tau_1)\lb \sigma_1\rb^{-b}g(\xi_2,\tau_2)\lb \sigma_2\rb^{-b}\|_{L^2_{\eta \tau}}\|_{L^2_{kk_1}},
\end{eqnarray*}
where Cauchy-Schwarz was applied to $\sum_{k_1\in\Z}$. Thus it is sufficient to show that
\begin{eqnarray}
\label{2.9}
\|\int d\tau_1\sum_{\eta_1 \in \Z^2}\chi_B (\eta_1)
f(\xi_1,\tau_1)\lb \sigma_1\rb^{-b}g(\xi_2,\tau_2)
\lb \sigma_2\rb^{-b}\|_{L^2_{\eta \tau}} \\
\ls R^{\e}|k_2|^{\frac12}\|f(k_1, \cdot,\cdot)\|_{L^2_{\eta_1 \tau_1}}
\|g(k_2, \cdot,\cdot)\|_{L^2_{\eta \tau}}. \nonumber
\end{eqnarray}
By the "Schwarz-method" developed in \cite{KPV96a}, \cite{KPV96b} and by \cite[Lemma 4.2]{GTV97},
\eqref{2.9} follows from
\begin{equation}\label{2.10}
\sum_{\eta_1 \in \Z^2}\chi_B (\eta_1) \lb \tau -\phi_0 (k_1)-\phi_0 (k_2)+ 
\frac{|\eta_1|^2}{k_1}+ \frac{|\eta_2|^2}{k_2}\rb^{-2b} \ls R^{2\e}|k_2|.
\end{equation}
For $\omega:=\eta_1-\frac{k_1}{k}\eta$ we have $\frac{|\eta_1|^2}{k_1}+ \frac{|\eta_2|^2}{k_2}=\frac{|\eta|^2}{k}+ \frac{k}{k_1k_2}|\omega|^2$,
so that with $a:=\tau -\phi_0 (k_1)-\phi_0 (k_2)+\frac{|\eta|^2}{k}$ the left hand side of \eqref{2.10} becomes

$$\sum_{\eta_1 \in \Z^2}\chi_B (\eta_1)\lb a + \frac{k}{k_1k_2}|\omega|^2\rb^{-2b} = \sum_{r \ge 0}\lb a + \frac{k}{k_1k_2}r\rb^{-2b} \sum_{r\le |\eta_1 - \frac{k_1}{k}\eta|^2<r+1}\chi_B (\eta_1).$$

By Corollary \ref{elephant} the inner sum is controlled by $c_{\e}R^{\e}$, while

$$\sum_{r \ge 0}\lb a + \frac{k}{k_1k_2}r\rb^{-2b} \ls \frac{|k_1k_2|}{|k|}\ls |k_2|,$$

which proves \eqref{2.10}.
\end{proof}

\emph{Remark:}

The quantity, which we precisely loose in the application of Lemma \ref{diophant}, is

$$r^{\e}\simeq |\eta_1 - \frac{k_1}{k}\eta|^{2 \e} \le \lb k\eta_1 - k_1 \eta\rb^{2 \e},$$

which is the symbol of the Fourier multiplier $M^{2 \e}$. Rereading carefully the calculation in the previous proof,
we see that - instead of \eqref{2.9} - the following estimate holds true as well.

\begin{eqnarray}
\label{2.12}
\|\int d\tau_1\sum_{\eta_1 \in \Z^2}\lb k\eta_1 - k_1 \eta\rb^{- \e}
f(\xi_1,\tau_1)\lb \sigma_1\rb^{-b}g(\xi_2,\tau_2)
\lb \sigma_2\rb^{-b}\|_{L^2_{\eta \tau}} \\
\ls \frac{|k_1k_2|^{\frac12}}{|k|^{\frac12}}\|f(k_1, \cdot,\cdot)\|_{L^2_{\eta_1 \tau_1}}
\|g(k_2, \cdot,\cdot)\|_{L^2_{\eta \tau}}. \nonumber
\end{eqnarray}

(Introducing the $M^{- \e}$ we cannot justify the sign assumption on $k, k_{1,2}$ any more.)
Multiplying by $|k|^{\frac12}$ and summing up over $k_1$ using Cauchy-Schwarz we obtain

\begin{equation}\label{2.13}
\|\F _x D_x^{\frac12}M^{- \e}(u,v)\|_{L^{\infty}_k L^2_{yt}}\ls \|u\|_{X_{\frac12,b}}\|v\|_{X_{\frac12,b}},
\end{equation}

from which \eqref{Mepsbil} follows by a further application of the Cauchy-Schwarz inequality. So Theorem \ref{Meps} is proved.

\begin{proof}[Proof of Theorem \ref{mainest}]
Since in Corollary \ref{elephant} the position of the disc is arbitrary, we may
replace $\chi_B (\eta_1)$ by $\chi_B (\eta_2)$ in the proof of Theorem \ref{central}, which gives

\begin{equation}\label{2.11}
\|u (P_Bv)\|_{L^2_{xyt}} \ls R^{\e} \|u\|_{X_{0,b}}\|v\|_{X_{s,b}}.
\end{equation}

Now we have symmetry between $u$ and $v$, so that we may interpolate bilinearly
to obtain

\begin{equation}\label{2.20}
\| (P_Bu)v\|_{L^2_{xyt}} \ls R^{\e} \|u\|_{X_{s_1,b}}\|v\|_{X_{s_2,b}}
\end{equation}

for $s_{1,2} \ge 0$ with $s_1+s_2>1$. Decomposing dyadically we obtain with $0 < \e' < \e$

\begin{eqnarray*}
\|uv\|_{L^2_{xyt}} \le & \sum_{l \ge 0} \|(P_{\Delta l}u)v\|_{L^2_{xyt}} 
\ls  \sum_{l \ge 0} 2^{l \e'} \|P_{\Delta l}u\|_{X_{s_1 ,b}}\|v\|_{X_{s_2,b}} \\
\ls & \sum_{l \ge 0} 2^{l (\e' - \e)}\|u\|_{X_{s_1,\e ,b}}\|v\|_{X_{s_2,b}} \ls \|u\|_{X_{s_1,\e ,b}}\|v\|_{X_{s_2,b}}.
\end{eqnarray*}

Exchanging $u$ and $v$ again we have shown for $s_{1,2} \ge 0$ with $s_1+s_2>1$
and $\e_{1,2} \ge 0$ with $\e_1+ \e_2 >0$ that

\begin{equation}\label{2.21}
\| uv\|_{L^2_{xyt}} \ls  \|u\|_{X_{s_1,\e_1,b}}\|v\|_{X_{s_2, \e_2,b}},
\end{equation}

which is the $\e_0=0$ part of \eqref{bil} in Theorem \ref{mainest}. To see the $\e_0>0$ part, we decompose

$$\|D_y^{-\e_0}(uv)\|_{L^2_{xyt}} \le \sum_{l \ge 0} 2^{-l \e_0} \|P_{\Delta l}(uv)\|_{L^2_{xyt}},$$

where for fixed $l$

$$\|P_{\Delta l}(uv)\|^2_{L^2_{xyt}}=  \sum_{\alpha, \beta \in \Z^2}\lb P_{\Delta l}((P_{Q^l_{\alpha}}u)v), P_{\Delta l}((P_{Q^l_{\beta}}u)v)\rb_{L^2_{xyt}}.$$

Now for $\eta_1 \in Q^l_{\alpha}$, $|\eta| \le 2^l$ we have $\eta_2=\eta-\eta_1 \in \widetilde{Q}^l_{-\alpha}$, so that
the latter can be estimated by

\begin{multline*}
\hspace{1cm} \sum_{\alpha, \beta \in \Z^2}\lb (P_{Q^l_{\alpha}}u)(P_{\widetilde{Q}^l_{-\alpha}}v), (P_{Q^l_{\beta}}u)(P_{\widetilde{Q}^l_{-\beta}}v)\rb_{L^2_{xyt}}  \\
\hspace{1cm}\le  \sum_{\alpha, \beta \in \Z^2} \lb (P_{\widetilde{Q}^l_{\alpha}}u)(P_{\widetilde{Q}^l_{\beta}}\overline{v}),(P_{\widetilde{Q}^l_{\beta}}u)(P_{\widetilde{Q}^l_{\alpha}}\overline{v})\rb_{L^2_{xyt}}\hfill\\
\hspace{1cm}\le  \sum_{\alpha, \beta \in \Z^2} \|(P_{\widetilde{Q}^l_{\alpha}}u)(P_{\widetilde{Q}^l_{-\beta}}v)\|_{L^2_{xyt}}\|(P_{\widetilde{Q}^l_{\beta}}u)(P_{\widetilde{Q}^l_{-\alpha}}v)\|_{L^2_{xyt}} \hfill\\
\hspace{1cm}\le  \sum_{\alpha, \beta \in \Z^2} \|(P_{\widetilde{Q}^l_{\alpha}}u)(P_{\widetilde{Q}^l_{-\beta}}v)\|^2_{L^2_{xyt}}.\hfill
\end{multline*}

Using \eqref{2.20} and the almost orthogonality of the sequence $\{P_{\widetilde{Q}^l_{\alpha}}v\}_{\alpha \in \Z^2}$ we estimate
the latter by

$$2^{2l\e}\sum_{\alpha, \beta \in \Z^2}\|P_{\widetilde{Q}^l_{\alpha}}u\|^2_{X_{s_1,b}}\|P_{\widetilde{Q}^l_{-\beta}}v\|^2_{X_{s_2,b}}\ls 2^{2l\e}\|u\|^2_{X_{s_1,b}}\|v\|^2_{X_{s_2,b}}.$$

Choosing $\e < \e_0$ the sum over $l$ remains finite and we arrive at

$$\|D_y^{-\e_0}(uv)\|_{L^2_{xyt}} \ls \|u\|_{X_{s_1,b}}\|v\|_{X_{s_2,b}}.$$

Finally we remark that \eqref{bil} and \eqref{bil'} are equivalent by duality.
\end{proof}

\section{Applications to KP-II type equations}

Here the phase function is specified as $\phi_0 (k) = |k|^{\alpha}k$, $\alpha \ge 2$,
so that the mixed weight becomes $\sigma = \tau - |k|^{\alpha}k + \frac{|\eta|^2}{k}$.
To prove the well-posedness results in Theorem \ref{local} and \ref{global}, we need
some more norms and function spaces, respectively. In both cases we use the spaces
$X_{s,\e,b; \beta}$ with additional weights, introduced in \cite{B93} and defined by 

\begin{equation}\label{beta}
\|f\|_{X_{s,\e,b;\beta}} := \left\|\langle k\rangle^{s}\lb\eta\rb^{\e} \langle \sigma \rangle^b
\Big(1+\frac{\langle \sigma \rangle}{\langle k\rangle^{\alpha+1}}\Big)^{\beta}\hat f\right\|_{L^2_{\tau \xi}},
\end{equation}

We will always have $\beta \ge 0$, so that 

\begin{equation}\label{5.1}
\|f\|_{X_{s,b}}\leq \|f\|_{X_{s,b;\beta}}.
\end{equation}

Observe that

\begin{equation}\label{5.2}
\|f\|_{X_{s,b}}\sim \|f\|_{X_{s,b;\beta}},
\end{equation}
if $\lb \sigma \rb \le \langle k\rangle^{\alpha+1}$.

The case $\alpha = 2$ corresponding to the original KP-II equation becomes a limiting case
in our considerations, where we have to choose the parameter $b= \frac12$. Thus we also
need the auxiliary norms

\begin{equation}\label{ys}
\|f\|_{Y_{s,\e;\beta}}:=\left\|\langle k\rangle^{s}\lb\eta\rb^{\e} \langle \sigma \rangle^{-1}
\Big(1+\frac{\langle \sigma \rangle}{\langle k\rangle^{\alpha+1}}\Big)^{\beta}\hat f\right\|_{L^2_{\xi}(L^1_{\tau })},
\end{equation}
cf. \cite{GTV97}, and
\begin{equation}\label{zs}
\|f\|_{Z_{s,\e;\beta}}:=\|f\|_{Y_{s,\e;\beta}}+\|f\|_{X_{s,\e, -\frac12 ;\beta}}.
\end{equation}

As before, for $\e =0$ we will write $X_{s,b;\beta}$ instead of $X_{s,\e,b;\beta}$,
and if the exponent $\beta$ of the additional weight is zero, we use $X_{s,\e,b}$ as
abbreviation for $X_{s,\e,b;\beta}$. Similar for the $Y$- and $Z$-norms.
In these terms the crucial bilinear estimate leading to Theorem \ref{local} is the following.

\begin{lemma}\label{est0}
Let $\alpha =2$, $s \ge \frac12$ and $\e > 0$. Then there exists $\gamma > 0$, 
such that for all $u,v$ supported in $[-T,T] \times \T ^3$ the estimate
\begin{equation}\label{bexy3}
\|\pd_x(uv)\|_{Z_{s,\e;\frac12}} \lesssim  T ^{\gamma}\|u\|_{X_{s,\e, \frac12 ; \frac12}} \|v\|_{X_{s,\e, \frac12 ; \frac12}}
\end{equation}
holds true.
\end{lemma}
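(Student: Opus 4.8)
The plan is to deduce \eqref{bexy3} from the $L^2$ bilinear estimate of Theorem \ref{mainest}, using the resonance relation of the equation to pay for the derivative $\pd_x$ and for the endpoint weights, and handling the limiting modulation exponent $b=\frac12$ through the $Y$- and $Z$-norms in the spirit of \cite{GTV97}.

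First I would record the resonance identity for $\alpha=2$, where $\phi_0(k)=k^3$. Since $\tau=\tau_1+\tau_2$, the KdV relation $k^3-k_1^3-k_2^3=3kk_1k_2$ together with the identity $\frac{|\eta_1|^2}{k_1}+\frac{|\eta_2|^2}{k_2}=\frac{|\eta|^2}{k}+\frac{k}{k_1k_2}|\omega|^2$ (with $\omega=\eta_1-\frac{k_1}{k}\eta$, already used in the proof of Theorem \ref{central}) yields
$$\sigma-\sigma_1-\sigma_2=-3kk_1k_2-\frac{k}{k_1k_2}|\omega|^2.$$
Exactly as in Theorem \ref{central}, the conjugation symmetry $\|uv\|=\|u\overline{v}\|$ and the oddness of $\phi_0$ let me assume $k_1,k_2$ of equal sign; then $k_1k_2>0$, the two terms on the right carry the same sign, so no cancellation occurs and
$$\max(\langle\sigma\rangle,\langle\sigma_1\rangle,\langle\sigma_2\rangle)\gtrsim |kk_1k_2|+\frac{|k|}{|k_1k_2|}|\omega|^2.$$
This ``comfortable'' lower bound is the quantity I would trade against the factor $|k|$ coming from $\pd_x$.

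Next I would unfold the definition \eqref{zs} of $Z_{s,\e;\frac12}$ and split $\widehat{\pd_x(uv)}$ according to which of $\langle\sigma\rangle,\langle\sigma_1\rangle,\langle\sigma_2\rangle$ is largest, distributing the output weights via $\langle k\rangle^s\lesssim\langle k_1\rangle^s\langle k_2\rangle^s$ and $\langle\eta\rangle^\e\lesssim\langle\eta_1\rangle^\e+\langle\eta_2\rangle^\e$ (both admissible since $s,\e\ge0$). When an input modulation dominates, say $\langle\sigma_1\rangle$, it controls the whole resonance, so I would spend part of the input weight $\langle\sigma_1\rangle^{1/2}$ to absorb $|k|$ and the extra $\beta$-factor, free the corresponding factor of $u$ into $L^2_{xyt}$, and then invoke \eqref{bil} or its dual \eqref{bil'} of Theorem \ref{mainest} on the remaining product. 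When instead the output modulation $\langle\sigma\rangle$ dominates, I would route the contribution into the $Y$-part \eqref{ys} of the $Z$-norm: there $\langle\sigma\rangle^{-1}$ measured in $L^1_\tau$, combined with $\langle\sigma\rangle\gtrsim|kk_1k_2|$, restores the $\tau$-integrability that $\langle\sigma\rangle^{-1/2}$ in $L^2_\tau$ just fails to provide at the endpoint, again reducing matters to Theorem \ref{mainest}. Throughout I would carry the weight $(1+\langle\sigma\rangle/\langle k\rangle^{3})^{1/2}$, which is calibrated precisely to the split of the resonance into its KdV piece $|kk_1k_2|$ (dominant when $\langle\sigma\rangle\lesssim\langle k\rangle^{3}$) and its Schr\"odinger piece $\frac{|k|}{|k_1k_2|}|\omega|^2$ (dominant when $\langle\sigma\rangle\gg\langle k\rangle^{3}$), so that the two regimes match along their common boundary.

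The hard part is that $s=\frac12$ and $b=\frac12$ are both endpoints with no slack. Applying \eqref{bil} with $s_1=s_2=\frac12$ sits exactly on the forbidden line $s_1+s_2=1$, so in each generic interaction I must recover a little extra $x$- and $y$-regularity from the derivative-cum-resonance $|k|\,\langle\sigma_j\rangle^{-1/2}\lesssim|k||kk_1k_2|^{-1/2}$; the KP-II resonance supplies it, but only marginally, and the worst case (one frequency low in both $x$ and $y$, where the resonance degenerates to $\sim|k|^2$ and the gain pays for $\pd_x$ with nothing to spare) must be treated by a separate, cruder bound. I expect the genuinely delicate bookkeeping to be keeping the $\beta=\frac12$ weight from being over-spent, making sure it is available on the output when it is cheap on the inputs and vice versa, across all modulation and frequency cases. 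Finally, the residual borderline is absorbed into $T^{\gamma}$: since $u,v$ are supported in $[-T,T]$, I would prove the estimate with a small surplus of modulation decay $\langle\sigma_j\rangle^{-\delta}$ and convert it through the standard time-localisation inequality $\|w\|_{X_{s,\e,\frac12-\delta}}\lesssim T^{\delta}\|w\|_{X_{s,\e,\frac12}}$ into a gain $T^{\gamma}$, $\gamma\sim\delta$, the $\e>0$ weights providing the matching room in the $y$-summation via the Diophantine count of Corollary \ref{elephant}.
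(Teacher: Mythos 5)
Your overall architecture — resonance relation, case analysis on the dominant modulation, the $Y$-part of the $Z$-norm to handle the endpoint $b=\frac12$, the $\beta$-weight bookkeeping between output and input sides, and a time-localisation factor $T^{\gamma}$ extracted from a surplus of modulation decay — is the same as the paper's. The genuine gap is in the one step you describe as "recover a little extra $x$- and $y$-regularity from the derivative-cum-resonance": at $s=\frac12$ the resonance supplies \emph{exactly} zero surplus. Spending $\lb\sigma_{max}\rb^{\frac12}\gs|kk_1k_2|^{\frac12}$ against $\pd_x$ reduces the symbol to $|k|\,|k_1k_2|^{-\frac12}$, i.e.\ after distributing $|k|\le|k_1|+|k_2|$ you land precisely on the line $s_1+s_2=1$, which is excluded in Theorem \ref{mainest}; and the inputs carry $b=\frac12$ exactly, whereas Theorem \ref{mainest} requires $b>\frac12$. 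Trying to keep a surplus $\lb\sigma_{max}\rb^{\delta}$ for the $T^{\gamma}$-factor only makes the derivative count worse, so the resonance cannot pay for both. Your fallback of "a separate, cruder bound" for the degenerate case does not address this, because the deficit occurs in the generic interaction, not only in a degenerate one.

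The paper closes this gap with an ingredient your proposal lacks: the elementary estimate \eqref{x}, proved by Sobolev embedding and Young's inequality, which holds for $s_1+s_2>\frac12$ and $b>\frac1{2p}$, i.e.\ well below both thresholds of Theorem \ref{mainest}. Bilinear (complex) interpolation between \eqref{x} and Theorem \ref{mainest} yields Corollary \ref{varstr}, which is valid \emph{at} $s_1+s_2=1$, with some $b<\frac12$ on the inputs, and with an $L^2_{\xi}L^p_{\tau}$, $p<2$, output. This single statement simultaneously (i) reaches the endpoint line in $s$, (ii) provides the strict inequality $b<\frac12$ that \eqref{time} converts into the factor $T^{\gamma}$, and (iii) controls the $Y$-contribution via $\|\lb\sigma\rb^{-\frac12}\widehat{f}\|_{L^2_{\xi}L^1_{\tau}}\ls\|\widehat{f}\|_{L^2_{\xi}L^p_{\tau}}$. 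Without some such interpolation (or an equivalent strengthening of the bilinear estimate below $b=\frac12$), the case-by-case scheme you outline cannot be closed at $s=\frac12$. A secondary, smaller imprecision: in the subcase where an input modulation dominates and $|k|\ll|k_1|\sim|k_2|$, the input weight $\bigl(1+\lb\sigma_1\rb/\lb k_1\rb^{3}\bigr)^{\frac12}$ must be used quantitatively to shift a full $x$-derivative from the high- to the low-frequency factor before the dual estimate \eqref{xx'} applies; your "not over-spending $\beta$" remark gestures at this but does not identify the mechanism.
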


Correspondingly for Theorem \ref{global} we have 

\begin{lemma}\label{est1}
Let $3<\alpha \leq 4$. Then, for $s > \frac{3-\alpha}{2}$ there exist $b'>-\frac{1}{2}$
and $\beta \in [0,-b']$, such that for all $b>\frac{1}{2}$
\begin{equation}\label{nonlin1}
\|D^{s+1+\e}_x M^{-\e}(u,v)\|_{X_{0,b';\beta}}\ls \|u\|_{X_{s,b;\beta}}\|v\|_{X_{s,b;\beta}},
\end{equation}
whenever $\e > 0$ is sufficiently small, and
\begin{equation}\label{nonlin2}
\|\pd_x(uv)\|_{X_{s,b';\beta}}\ls \|u\|_{X_{s,b;\beta}}\|v\|_{X_{s,b;\beta}}.
\end{equation}
\end{lemma}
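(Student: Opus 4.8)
The plan is to pass to the Fourier side and reduce both \eqref{nonlin1} and \eqref{nonlin2} to weighted trilinear bounds governed by the resonance relation. Writing $\widehat{u}=f_1/(\lb k_1\rb^{s}\lb\sigma_1\rb^{b}w_1^{\beta})$ and $\widehat{v}=f_2/(\lb k_2\rb^{s}\lb\sigma_2\rb^{b}w_2^{\beta})$ with $w_i=1+\lb\sigma_i\rb\lb k_i\rb^{-(\alpha+1)}$ and $\|f_i\|_{L^2}$ equal to the two right hand norms, and dualising the output against an $L^2_{\xi\tau}$ function $f_0$, estimate \eqref{nonlin2} becomes the bound $|\sum\int m\,f_0 f_1 f_2|\ls\prod\|f_i\|_{L^2}$ for the multiplier
$$m=\frac{|k|\lb k\rb^{s}\lb\sigma\rb^{b'}w^{\beta}}{\lb k_1\rb^{s}\lb k_2\rb^{s}\lb\sigma_1\rb^{b}\lb\sigma_2\rb^{b}w_1^{\beta}w_2^{\beta}},$$
and \eqref{nonlin1} the analogous bound with $|k|\lb k\rb^{s}$ replaced by $|k|^{s+1+\e}\lb k\omega\rb^{-\e}$, where $\omega=\eta_1-\tfrac{k_1}{k}\eta$ and $\lb k\omega\rb^{-\e}=\lb k\eta_1-k_1\eta\rb^{-\e}$ is the symbol of $M^{-\e}$. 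Before splitting into cases I dispose of the extra weights: as $\beta\ge0$ the denominator factors $w_1^{\beta},w_2^{\beta}\ge1$ may be dropped, while in the numerator $w^{\beta}\lb\sigma\rb^{b'}\ls\lb\sigma\rb^{b'}+\lb\sigma\rb^{b'+\beta}\lb k\rb^{-(\alpha+1)\beta}$, which by $\beta\in[0,-b']$ stays bounded and, in the high modulation range $\lb\sigma\rb\gs\lb k\rb^{\alpha+1}$, trades $\sigma$-decay for the $x$-gain $\lb k\rb^{(\alpha+1)b'}$ that Bourgain's weight was built to produce, cf. \eqref{5.1}, \eqref{5.2}.

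The engine is the computation already made in the proof of Theorem \ref{central}: one has $\sigma-\sigma_1-\sigma_2=\Psi$ with $\Psi:=\phi_0(k_1)+\phi_0(k_2)-\phi_0(k)-\tfrac{k}{k_1k_2}|\omega|^2$, so that $\max(\lb\sigma\rb,\lb\sigma_1\rb,\lb\sigma_2\rb)\gs\lb\Psi\rb$. The decisive structural fact for $\phi_0(k)=|k|^{\alpha}k$ is that its dispersive part is, by convexity and the mean value theorem, \emph{always} large: $|\phi_0(k_1)+\phi_0(k_2)-\phi_0(k)|\gs|k_1k_2|\max(|k_1|,|k_2|)^{\alpha-1}$, irrespective of the signs of $k_1,k_2$. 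This is precisely where $\alpha>3$ buys room, since the attainable modulation gain then exceeds $\lb k\rb^{4}$.

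The core is a case distinction according to which of $\lb\sigma\rb,\lb\sigma_1\rb,\lb\sigma_2\rb$ is largest. In the non-resonant regime, where $\lb\Psi\rb$ is comparable to its dispersive part, the maximal modulation is $\gs|k_1k_2|\max(|k_1|,|k_2|)^{\alpha-1}$; a fractional power of it is spent to cover the derivative factor $|k|$ (resp. $|k|^{s+1+\e}\lb k\omega\rb^{-\e}$) and the regularity deficit $\lb k_1\rb^{-s}\lb k_2\rb^{-s}$ coming from $s<0$, after which the remaining form is closed by the $L^2$ bilinear estimate \eqref{bil}, and by its dual \eqref{bil'} when $\sigma$ itself is maximal (here one exploits that for $\alpha$ large the modulation gain dominates the negative-regularity growth $\lb k\rb^{-s}$). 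Because the dispersive part cannot vanish, the only resonant regime is the one in which the mixed term $\tfrac{k}{k_1k_2}|\omega|^2$ nearly cancels it; this forces $|\omega|\gs|k_1k_2|\,|k|^{(\alpha-2)/2}$, hence $\lb k\omega\rb$ large and $\lb k\omega\rb^{-\e}$ small. In this regime \eqref{nonlin1} is closed directly by the $M^{-\e}$ estimate \eqref{Mepsbil} of Theorem \ref{Meps}, the $M^{-\e}$-gain simultaneously paying for the $\e$ extra $x$-derivatives and compensating the unavoidable $y$-loss — which is exactly what permits $\e=0$ in the $y$ variable — while for \eqref{nonlin2} the same forced largeness of $|\omega|$ feeds the Diophantine count of Lemma \ref{diophant} and Corollary \ref{elephant}, whose $\lb k\omega\rb^{2\e}$-loss is absorbed into the derivative surplus available for $\alpha>3$. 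Balancing exponents in the worst, high-high interaction reproduces the scaling-critical value $\tfrac{3-\alpha}{2}$, so that every step has room for $s>\tfrac{3-\alpha}{2}$ once $b'>-\tfrac12$ is chosen close to $-\tfrac12$ and $\beta\in[0,-b']$ accordingly.

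The main obstacle is exactly this resonant region in the presence of the full $\pd_x$. Since the dispersive part of the resonance is always large for this $\phi_0$, a small total modulation can arise only through a large, compensating mixed part, and the whole estimate hinges on converting this near-cancellation quantitatively into smallness of $\lb k\omega\rb^{-\e}$ and matching it against the derivative $|k|$ at regularity $s$ near $\tfrac{3-\alpha}{2}$. Making this bookkeeping close — rather than the routine non-resonant cases, which only require careful distribution of the modulation gain — is the heart of the matter, and is the precise point at which Theorem \ref{Meps}, together with the observation in the Remark after Theorem \ref{central} that the loss is the symbol of $M^{2\e}$, becomes indispensable.
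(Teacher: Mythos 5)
Your proposal rests on a structural misunderstanding of the resonance relation that undermines its case analysis. For $\phi_0(k)=|k|^{\alpha}k$ the two terms on the right of \eqref{rr1}, the dispersive part $r(k,k_1)$ and the mixed part $\frac{|k\eta_1-k_1\eta|^2}{kk_1k_2}$, have the \emph{same sign} --- this is the defining feature of the KP-II hierarchy and is stated immediately before the proof. Consequently $\max\{|\sigma|,|\sigma_1|,|\sigma_2|\}$ controls the \emph{sum} as in \eqref{rr}, and the ``resonant regime in which the mixed term nearly cancels the dispersive part'', around which you organise the argument, is empty. What you thereby lose is not a case but the mechanism: the correct proof uses the fact that the maximal modulation \emph{always} dominates $\frac{|k\eta_1-k_1\eta|^2}{|kk_1k_2|}$ to bound $\lb k\eta_1-k_1\eta\rb^{\e}\ls(|kk_1k_2|\lb\sigma\rb\lb\sigma_1\rb\lb\sigma_2\rb)^{\e/2}$ and hence to deduce \eqref{nonlin2} from \eqref{nonlin1} by relabelling $s$, $b$, $b'$ by $\e/2$; your proposal never derives \eqref{nonlin2} from \eqref{nonlin1} and instead invokes Lemma \ref{diophant} in a regime that does not occur. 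Moreover your quantitative lower bound $|r|\gs|k_1k_2|\max(|k_1|,|k_2|)^{\alpha-1}$ is false when $k_1$ and $k_2$ have opposite signs (take $\alpha=2$, $k_1=N$, $k_2=-N+1$, so $k=1$ and $|r|=3|kk_1k_2|\sim N^2$, not $N^3$); the correct size is $|k_{\min}||k_{\max}|^{\alpha}$ over all three frequencies, smaller by the factor $|k|/|k_1|$ precisely in the high-high interaction where you claim to ``balance exponents'' to reach $s>\frac{3-\alpha}{2}$.

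Two further steps would fail as written. First, in your non-resonant regime you propose to close with \eqref{bil} and \eqref{bil'} of Theorem \ref{mainest}; these require $\e_0+\e_1+\e_2>0$, i.e.\ a positive total amount of $y$-regularity, which is not available in \eqref{nonlin1}--\eqref{nonlin2} since the data lie only in $H^s_xL^2_y$. The estimate that must carry every case --- not merely a putative resonant one --- is \eqref{Mepsbil} of Theorem \ref{Meps} together with its dual \eqref{Mepsbil'}, where $M^{-\e}$ substitutes for the missing $y$-derivatives; reaching its hypotheses ($X_{\frac12+,b}$ on both factors starting from $X_{s,b}$ with $s$ as low as $\frac{3-\alpha}{2}+$) is exactly where the modulation gain and the numerology $s>2+(\alpha+1)b'+3\e$ with $b'$ close to $-\frac12$ enter, and none of this bookkeeping is carried out. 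Second, discarding the input weights $w_1^{\beta},w_2^{\beta}$ at the outset reduces the claim to a statement that is not provable by these methods: in the unbalanced interaction $|k_1|\ll|k|\sim|k_2|$ with $\lb\sigma_1\rb$ maximal, the weight on the low-frequency factor is $\gs(|k|/|k_1|)^{\alpha\beta}$ and is indispensable for shifting a derivative from the high- to the low-frequency factor; this is the reason the $\beta$-weights are introduced at all. The proposal identifies several of the right tools but, as it stands, does not constitute a proof.
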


In the proof of both Lemmas above the resonance relation for the KP-II-type
equation with quadratic nonlinearity plays an important role. We have
\begin{equation}\label{rr1}
 \sigma_1+\sigma_2-\sigma = r(k,k_1) + \frac{|k\eta_1-k_1\eta|^2}{kk_1k_2},
\end{equation}

where
 
$$|r(k, k_1)| =| |k|^{\alpha}k-|k_1|^{\alpha}k_1-|k_2|^{\alpha}k_2 |\sim |k_{max}|^{\alpha}|k_{min}|,$$

see \cite{H}. Both terms on the right of \eqref{rr1} have the same sign, so that

\begin{equation}\label{rr}
 \max\{|\sigma|, |\sigma_1|, |\sigma_2|\} \gs |k_{min}| |k_{max}|^{\alpha} +
 \frac{|k\eta_1-k_1\eta|^2}{|kk_1k_2|}.
\end{equation}

The proof of Lemma \ref{est0} is almost the same as that of  Lemma 4 in \cite{GPS08}, it is repeated here - with minor modifications -
for the sake of completeness. We need a variant of Theorem \ref{mainest} with $b < \frac12$. To obtain this, we first
observe that, if $s_{1,2}\ge 0$ with $s_1+s_2> \frac12$, $\e_{0,1,2} \ge 0$ with $\e_0 +\e_1+\e_2 >1$,
$1 \le p \le 2$, and $b > \frac1{2p}$, then
\begin{equation}\label{x}
\|\F D_y^{-\e_0}(uv)\|_{L^2_{\xi}L^p_{\tau}} \ls \|u\|_{X_{s_1,\e_1,b}}\|v\|_{X_{s_2,\e_2,b}}.
\end{equation}
This follows from Sobolev type embeddings and applications of Young's inequality. 
Dualizing the $p=2$ part of \eqref{x} we obtain
\begin{equation}\label{x'}
\|uv\|_{X_{-s_1,-\e_1,-b}}\ls \| D_y^{\e_0}u\|_{L^2_{xyt}}\|v\|_{X_{s_2,\e_2,b}}.
\end{equation}
Now bilinear interpolation with Theorem \ref{mainest} gives the following.

\begin{kor}
\label{varstr}
Let $s_{1,2}\ge 0$ with $s_1+s_2=1$ and $\e_{1,2} \ge 0$ with $\e_1+\e_2 >0$, then there
exist $b < \frac12$ and $p<2$ such that
\begin{equation}\label{xx}
\|\F (uv)\|_{L^2_{\xi}L^p_{\tau}}+\| uv\|_{L^2_{xyt}} \ls \|u\|_{X_{s_1,\e_1,b}}\|v\|_{X_{s_2,\e_2,b}}
\end{equation}
and 
\begin{equation}\label{xx'}
\|uv\|_{X_{-s_1,-b}} \ls \| D_y^{\e _1}u\|_{L^2_{xyt}} \|v\|_{X_{s_2,\e_2,b}}
\end{equation}
hold true.
\end{kor}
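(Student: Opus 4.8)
The plan is to obtain both \eqref{xx} and \eqref{xx'} by a single bilinear complex interpolation, viewing $(u,v)\mapsto uv$ as a bilinear map between the (weighted $L^2$) Bourgain scales $X_{s,\e,b}$ and the mixed-norm target scale $L^2_\xi L^p_\tau$. Since the $X$-spaces are isometric to weighted $L^2$, they form an admissible interpolation scale in each of the parameters $s,\e,b$ separately, and $L^2_\xi L^p_\tau$ interpolates in $p$; hence the bilinear complex method applies and yields, for an interpolation parameter $\theta\in(0,1)$, an estimate carrying the \emph{affine} combinations of the corner exponents on both sides. The two corners are Theorem \ref{mainest} (in the form \eqref{bil} with $\e_0=0$, output $L^2_{xyt}=L^2_\xi L^2_\tau$) and the auxiliary estimate \eqref{x}.

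First I would fix the corners with free data: corner A is Theorem \ref{mainest} with $\tilde s_1+\tilde s_2=S_A>1$, $\tilde\e_1+\tilde\e_2=E_A>0$, $b_1>\frac12$; corner B is \eqref{x} with $\hat s_1+\hat s_2=S_B>\frac12$, $\hat\e_1+\hat\e_2=E_B>1$, $b_2>\frac1{2p_2}$. Interpolating with weight $\theta$ on A produces the input spaces $X_{s_i,\e_i,b}$ with $s_i=\theta\tilde s_i+(1-\theta)\hat s_i$, $\e_i=\theta\tilde\e_i+(1-\theta)\hat\e_i$, $b=\theta b_1+(1-\theta)b_2$, and output $L^2_\xi L^p_\tau$ with $\frac1p=\frac\theta2+\frac{1-\theta}{p_2}$. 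The $\|uv\|_{L^2_{xyt}}$ summand of \eqref{xx} is gotten by the same scheme with $p_2=2$ in corner B (both outputs then $L^2_\tau$); taking for $b$ the larger of the two resulting sub-$\frac12$ values, and noting that enlarging $b$ below $\frac12$ only weakens the right-hand side, both summands hold with one common $b<\frac12$.

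The heart of the matter is to choose the free data so that the affine images hit the prescribed $(s_1,s_2,\e_1,\e_2)$ with $s_1+s_2=1$, $\e_1+\e_2>0$, while forcing $b<\frac12$ and $p<2$. Summing the $s$- and $\e$-relations reduces this to $\theta S_A+(1-\theta)S_B=1$ and $\theta E_A+(1-\theta)E_B=\e_1+\e_2$; since $S_A>1$ this requires $S_B<1$, and since $E_B>1$ the second relation forces $1-\theta<\e_1+\e_2$, i.e. $\theta$ close to $1$ when the $\e$-budget is small. This is the delicate point: as $\theta\to1$ the interpolated $b=\theta b_1+(1-\theta)b_2$ drifts back up towards $b_1>\frac12$. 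The remedy is to place the two corner values of $b$ on \emph{opposite} sides of $\frac12$ — take $b_2\in(\frac14,\frac12)$ and $b_1$ only infinitesimally above $\frac12$ — so that the small weight $1-\theta$ on the sub-$\frac12$ value $b_2$ still pulls $b$ below $\frac12$; a one-line estimate shows $b<\frac12$ once $b_1-\frac12$ is small enough relative to $\e_1+\e_2$. With $p_2<2$ fixed one has $\frac1p=\frac\theta2+\frac{1-\theta}{p_2}>\frac12$ automatically, giving $p<2$. The remaining freedom in the individual (rather than summed) exponents matches $s_1,\e_1$ and $s_2,\e_2$ separately, which is an elementary linear solve keeping every corner exponent nonnegative.

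Finally, \eqref{xx'} follows from the identical interpolation applied to the dual corners \eqref{bil'} and \eqref{x'} (equivalently, by dualising the $L^2_{xyt}$ half of \eqref{xx}), the $\e$-budget now being carried by the $D_y^{\e_1}u$ factor; after relabelling the role of $\e_0$ the bookkeeping is the same. I expect the main obstacle to be exactly the simultaneous fulfilment of $s_1+s_2=1$ — which, through the forced $\e$-constraint, pins $\theta$ near $1$ — and $b<\frac12$; once the opposite-sides choice of $b_1,b_2$ is made this becomes routine, and the underlying bilinear interpolation of these weighted-$L^2$ and mixed-norm scales is standard.
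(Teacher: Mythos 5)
Your proposal is correct and follows essentially the same route as the paper, which obtains the corollary precisely by bilinear interpolation between Theorem \ref{mainest} (resp. \eqref{bil'}) and the auxiliary estimate \eqref{x} (resp. \eqref{x'}); your bookkeeping of the affine exponent relations, the choice of $b_1$ barely above $\frac12$ against $b_2<\frac12$ to keep the interpolated $b$ below $\frac12$, and the proportional splitting of the individual $s_i,\e_i$ correctly fill in the details the paper leaves implicit.
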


The purpose of the $p<2$ part in the above Corollary is to deal with the $Y$-contribution to the $Z$-norm
in Lemma \ref{est0}. Its application will usually follow on an embedding
$$\|\lb \sigma \rb^{-\frac12}\widehat{f}\|_{L^2_{\xi}L^1_{\tau}} \ls \|\widehat{f}\|_{L^2_{\xi}L^p_{\tau}} ,$$
where $p<2$ but arbitrarily closed to $2$. Now we're prepared to establish Lemma \ref{est0}.

\begin{proof}[Proof of Lemma \ref{est0}] 
Without loss of generality we may assume that $s = \frac12$. The proof consists of the following case
by case discussion.

{\bf Case a:} $\lb k\rb^3 \le \lb \sigma \rb$. First we observe that
\begin{equation}\label{+++}
\|\pd_x(uv)\|_{Z_{s,\e;\frac12}} \lesssim \|D_x^{s+1}(D_y^{\e}u\cdot v)\|_{Z_{0,0;\frac12}}+\|D_x^{s+1}(u\cdot D_y^{\e}v)\|_{Z_{0,0;\frac12}}.
\end{equation}
The first contribution to \eqref{+++} equals
\begin{equation*}
\begin{split}
\|\F(D_y^{\e}u\cdot v)\|_{L^2_{\xi,\tau}} + \|\lb \sigma \rb^{-\frac12}\F(D_y^{\e}u\cdot v)\|_{L^2_{\xi}L^1_{\tau}} \\
\ls \|\F(D_y^{\e}u\cdot v)\|_{L^2_{\xi,\tau} \cap L^2_{\xi}L^p_{\tau}} \ls \|u\|_{X_{s,\e,b}}\|v\|_{X_{s,\e,b}}
\end{split}
\end{equation*}
by \eqref{xx}, for some $b < \frac12$. Using the fact\footnote{for a proof see e. g. Lemma 1.10 in \cite{AG}} that
under the support assumption on $u$ the inequality
\begin{equation}\label{time}
\|u\|_{X_{s,\e,b}}\ls T^{\tilde{b}-b}\|u\|_{X_{s,\e,\tilde{b}}}
\end{equation}
holds, whenever $-\frac12 < b < \tilde{b} < \frac12$, this can, for some $\gamma > 0$, be further estimated by
$T^{\gamma}\|u\|_{X_{s,\e, \frac12 ; \frac12}} \|v\|_{X_{s,\e, \frac12 ; \frac12}}
$ 
as desired. The second contribution to \eqref{+++} can be treated in presicely the same manner.

{\bf Case b:} $\lb k\rb^3 \ge \lb \sigma \rb$. Here the additional weight on the left is of size one, so
that we have to show
$$\|\pd_x(uv)\|_{Z_{s,\e}} \lesssim T^{\gamma}\|u\|_{X_{s,\e, \frac12 ; \frac12}} \|v\|_{X_{s,\e, \frac12 ; \frac12}}
. $$

{\bf Subcase b.a:} $\sigma$ maximal. Exploiting the resonance relation \eqref{rr}, we see that the contribution
from this subcase is bounded by
$$\|\F D_xD_y^{\e}(D_x^{-\frac12}u\cdot D_x^{-\frac12}v)\|_{L^2_{\xi,\tau} \cap L^2_{\xi}L^p_{\tau}} \ls \|\F(D_x^{\frac12}D_y^{\e}u\cdot D_x^{-\frac12}v)\|_{L^2_{\xi,\tau} \cap L^2_{\xi}L^p_{\tau}} + \dots ,$$
where $p<2$. The dots stand for the other possible distributions of derivatives on the two factors,
in the same norms, which - by \eqref{xx} of Corollary \ref{varstr} - can all be estimated by $c\|u\|_{X_{s,\e,b}}\|v\|_{X_{s,\e,b}}$
for some $b < \frac12$. The latter is then further treated as in case a.

{\bf Subcase b.b:} $\sigma_1$ maximal. Here we start with the observation that by Cauchy-Schwarz and \eqref{time},
for every $b' > -\frac12$ there is a $\gamma > 0$ such that
$$\|\pd_x(uv)\|_{Z_{s,\e}} \lesssim T^{\gamma}\|D_x^{s+1}(uv)\|_{X_{0,\e,b'}}.$$
With the notation $\Lambda^b = \F ^{-1}\lb \sigma \rb^b \F$ we obtain from the resonance relation that
\begin{equation*}
\begin{split}
&\|D_x^{s+1}(uv)\|_{X_{0,\e,b'}} \ls \|D_x(D_x^{-\frac12}\Lambda^{\frac12}u \cdot D_x^{-\frac12}v)\|_{X_{0,\e,b'}}\\
\ls & \|(D_x^{\frac12}D_y^{\e}\Lambda^{\frac12}u)  (D_x^{-\frac12}v)\|_{X_{0,b'}}  + \|(D_x^{\frac12}\Lambda^{\frac12}u)  (D_x^{-\frac12}D_y^{\e}v)\|_{X_{0,b'}}\\
+ & \|(D_x^{-\frac12}D_y^{\e}\Lambda^{\frac12}u)  (D_x^{\frac12}v)\|_{X_{0,b'}}  + \|(D_x^{-\frac12}\Lambda^{\frac12}u)  (D_x^{\frac12}D_y^{\e}v)\|_{X_{0,b'}}
\end{split}
\end{equation*}
Using \eqref{xx'} the first two contributions can be estimated by $c\|u\|_{X_{s,\e,\frac12}}\|v\|_{X_{s,\e,b}}$
as desired. The third and fourth term only appear in the frequency range $|k| \ll |k_1| \sim |k_2|$, where
the additional weight in the $\|u\|_{X_{s,\e,\frac12 ; \frac12}}$-norm on the right becomes $\frac{|k_2|}{|k_1|}$,
thus shifting a whole derivative from the high frequency factor $v$ to the low frequency factor $u$. So,
using \eqref{xx'} again, these contributions can be estimated by 
$$c\|u\|_{X_{s,\e,\frac12 ; \frac12}}\|v\|_{X_{s,\e,b}}\ls \|u\|_{X_{s,\e,\frac12 ; \frac12}}\|v\|_{X_{s,\e,b ; \frac12}}.$$

\end{proof}

Now we turn to the proof of Lemma \ref{est1}, where the restrictions to the $b$-parameters
can be relaxed slightly, so that the auxiliary $Y$- and $Z$-norms are not needed.
We use again the $\Lambda$-notation, i. e. $\Lambda^b = \F ^{-1}\lb \sigma \rb^b \F$.

\begin{proof}[Proof of Lemma \ref{est1}] 

First we show how \eqref{nonlin1} implies \eqref{nonlin2}. By the resonance relation
\eqref{rr} we have

$$|k_1\eta-k \eta_1|^2 \ls |kk_1k_2|(\lb \sigma \rb+\lb \sigma_1 \rb+\lb \sigma_2 \rb)\le|kk_1k_2|\lb \sigma \rb\lb \sigma_1 \rb\lb \sigma_2 \rb,$$

so that \eqref{nonlin2} is reduced to

$$\|D^{s+1+\frac{\e}{2}}_x M^{-\e}(u,v)\|_{X_{0,b'+\frac{\e}{2};\beta}}\ls \|u\|_{X_{s-\frac{\e}{2},b-\frac{\e}{2};\beta}}\|v\|_{X_{s-\frac{\e}{2},b-\frac{\e}{2};\beta}},
$$

Relabelling appropriately and choosing $\e$ sufficiently small, we see that \eqref{nonlin2} follows
from \eqref{nonlin1}. To prove the latter, we may assume $s\le 0$. Next we choose $\e$
small and $b'$ close to $-\frac12$ so that

\begin{equation}\label{s}
s > 2 + (\alpha +1)b' + 3 \e
\end{equation}

and $\beta:=\frac{s-b'}{\alpha}\in[0,-b']$. Now the proof consists again of a case
by case discussion.

{\bf Case a:} $\lb k\rb^{\alpha +1} \le \lb \sigma \rb$. Here it is sufficient to show

\begin{equation}\label{pf3}
\|D^{s+1+\e -\alpha \beta -\beta}_x M^{-\e}(u,v)\|_{X_{0,b'+\beta}}\ls \|u\|_{X_{s,b}}\|v\|_{X_{s,b}}
\end{equation}

{\bf Subcase a.a:} $|k| \ll |k_1| \sim |k_2|$.

{\bf Subsubcase triple a:} $\lb \sigma \rb \ge \lb \sigma_{1,2} \rb$. Here we use the resonance
relation \eqref{rr} to see that the left hand side of \eqref{pf3} is bounded by

\begin{eqnarray*}
& \|D^{s+1+\e -\alpha \beta +b'}_x M^{-\e}(D_x^{\frac{\alpha(b'+\beta)}{2}}u,D_x^{\frac{\alpha(b'+\beta)}{2}}v)\|_{L^2_{xyt}} \\
\ls & \| M^{-\e}(D_x^{\frac{s+1+\e+(\alpha+1)b'}{2}}u,D_x^{\frac{s+1+\e+(\alpha+1)b'}{2}}v)\|_{L^2_{xyt}},
\end{eqnarray*}
where we have used the assumption on the frequency sizes in this subcase. Observe that our choice of
$\beta$ implies $s+1+\e -\alpha \beta +b'=1+\e+2b' \ge 0$. Now the bilinear estimate
\eqref{Mepsbil} is applied to obtain the upper bound

$$\|D_x^{\frac{s+2+3\e+(\alpha+1)b'}{2}}u\|_{X_{0,b}}\|D_x^{\frac{s+2+3\e+(\alpha+1)b'}{2}}v\|_{X_{0,b}}\ls\|u\|_{X_{s,b}}\|v\|_{X_{s,b}},$$

where in the last step we have used \eqref{s}.

{\bf Subsubcase a.a.b:} $\lb \sigma_1 \rb \ge\lb \sigma \rb ,\lb \sigma_2 \rb $. Here the resonance relation
\eqref{rr} gives that the left hand side of \eqref{pf3} is bounded by

\begin{eqnarray*}
& \|D^{s+1+\e -\alpha \beta +b'}_x M^{-\e}(D_x^s\Lambda^bu, D_x^{\alpha(b'+\beta)-s}v)\|_{X_{0,-b}} \\
\ls & \|D^{-\frac12-\e}_x M^{-\e}(D_x^s\Lambda^bu, D_x^{\frac32 + 2 \e + (\alpha+1)b'}v)\|_{X_{0,-b}}.
\end{eqnarray*}

Now the dual version of estimate \eqref{Mepsbil}, that is

\begin{equation}\label{Mepsbil'}
\|M^{- \e} (u,v)\|_{X_{-\frac12 -,-\frac12 -}}\ls \|u\|_{L^2_{xyt}}\|v\|_{X_{\frac12 +,\frac12 +}}
\end{equation}

is applied, which gives, together with the assumption \eqref{s}, that the latter
is bounded by $c\|u\|_{X_{s,b}}\|v\|_{X_{s,b}}$. This completes the discussion of subcase a.a.
Concerning subcase a.b, where $|k| \gs |k_{1,2}|$, we solely remark that it can be
reduced to the estimation in subsubcase triple a.

{\bf Case b:} $\lb k\rb^{\alpha +1} \ge \lb \sigma \rb$. Here the additional weight
in the norm on the left of \eqref{nonlin1} is of size one, so our task is to show

\begin{equation}\label{pf4}
\|D^{s+1+\e}_x M^{-\e}(u,v)\|_{X_{0,b'}}\ls 
\|u\|_{X_{s,b;\beta}}\|v\|_{X_{s,b;\beta}}.
\end{equation}

{\bf Subcase b.a:} $\lb \sigma \rb \ge \lb \sigma_{1,2} \rb$. Here we may assume by symmetry
that $|k_1| \ge |k_2|$. We apply \eqref{rr} and \eqref{Mepsbil} to see that for $\delta \ge 0$
the left hand side of \eqref{pf4} is controlled by

\begin{eqnarray*}
 & \| M^{-\e}(D_x^{s+1+\e+\alpha b'+\delta}u,D_x^{b'-\delta}v)\|_{L^2_{xyt}}\\
 \ls & \|D_x^{\frac32 + 2 \e +\alpha b'+\delta}u\|_{X_{s,b}}\|D_x^{b'+ \frac12 + \e -\delta}v\|_{X_{0,b}}.
\end{eqnarray*}

The latter is bounded by $c\|u\|_{X_{s,b}}\|v\|_{X_{s,b}}$, provided $\frac32 + 2 \e +\alpha b'+\delta \le 0$
and $b'+ \frac12 + \e -\delta \le s$, which can be fulfilled by a proper choice of $\delta \ge 0$, since \eqref{s} holds.

{\bf Subcase b.b:} $\lb \sigma_1 \rb \ge\lb \sigma \rb ,\lb \sigma_2 \rb $. 

{\bf Subsubcase b.b.a:} $|k_1| \gs |k_2|$. With $\delta \ge 0$ as in subcase b.a the contribution here is bounded by

\begin{eqnarray*}
 & \|D_x^{-\frac12 -\e} M^{-\e}(D_x^{s+\frac32+2\e+\alpha b'+\delta}\Lambda^bu,D_x^{b'-\delta}v)\|_{X_{0,-b}}\\
 \ls & \|D_x^{\frac32 + 2 \e +\alpha b'+\delta}u\|_{X_{s,b}}\|D_x^{b'+ \frac12 + \e -\delta}v\|_{X_{0,b}} \ls \|u\|_{X_{s,b}}\|v\|_{X_{s,b}},
\end{eqnarray*}

where \eqref{rr} and the dual version \eqref{Mepsbil'} of Theorem \ref{Meps} were used again.
Finally we turn to the

{\bf Subsubcase triple b,} where $|k_1| \ll |k| \sim |k_2|$. Here the additional
weight in $\|u\|_{X_{s,b;\beta}}$ on the right of \eqref{pf4} behaves like

$$\left(\frac{|k|}{|k_1|} \right)^{\alpha \beta} \sim \left(\frac{|k_2|}{|k_1|} \right)^{\alpha \beta},$$

so that it is sufficient to show

\begin{equation}\label{pf5}
\|D^{s+1+\e}_x M^{-\e}(u,D_x^{-\alpha \beta}v)\|_{X_{0,b'}}\ls
\|u\|_{X_{s-\alpha \beta,b}}\|v\|_{X_{s,b}}.
\end{equation}

Now by \eqref{rr} the left hand side of \eqref{pf5} can be controlled by

\begin{eqnarray*}
 & \|D_x^{-\frac12 -\e} M^{-\e}(D_x^{b'}\Lambda^bu,D_x^{s + \frac32 + 2 \e +\alpha (b'-\beta)}v)\|_{X_{0,-b}}\\
 \ls & \|D_x^{b'}u\|_{X_{0,b}}\|D_x^{2 + 3\e + (\alpha +1)b'}v\|_{X_{0,b}}
\end{eqnarray*}

by \eqref{Mepsbil'}. Since $b'=s-\alpha \beta$ the first factor equals $\|u\|_{X_{s-\alpha \beta,b}}$,
while by \eqref{s} the second is dominated by $\|v\|_{X_{s,b}}$. This proves \eqref{pf5}.
\end{proof}

Finally we recall the definition of the Fourier restriction norm spaces from \cite{B93}.
For a time slab $I=(-\delta, \delta) \times \T^3$ they are given by

$$X^{\delta}_{s,\e , b; \beta} := \{u|_I : u \in X_{s,\e , b; \beta}\}$$

with norm

$$\|u\|_{X^{\delta}_{s,\e , b; \beta}}:= \inf \{\|\widetilde{u}\|_{X_{s,\e , b; \beta}}:\widetilde{u} \in X_{s,\e , b; \beta}, \widetilde{u}|I=u \}.$$

Now our well-posedness results read as follows.

\begin{theorem}[precise version of Theorem 3]
\label{finest}
Let $s \ge \frac12$ and $\e > 0$. Then for $u_0 \in H^{s}_xH^{\e }_y(\T^3)$ satisfying
\eqref{meanzero} there exist $\delta=\delta(\|u_0\|_{H^{s}_xH^{\e }_y})>0$ and a
unique solution $u \in X^{\delta}_{s,\e , \frac12; \frac12}$ of the Cauchy problem
\eqref{KPII} with $\alpha = 2$. This solution is persistent and the mapping $u_0 \mapsto u$,
$H^{s}_xH^{\e }_y(\T^3) \rightarrow X^{\delta_0}_{s,\e , \frac12; \frac12}$ is locally
Lipschitz for any $\delta_0 \in (0,\delta)$.
\end{theorem}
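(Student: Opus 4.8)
The plan is to solve \eqref{KPII} with $\alpha=2$ by a contraction-mapping argument in the restriction-norm space $X^\delta_{s,\varepsilon,\frac12;\frac12}$, with Lemma \ref{est0} furnishing the decisive nonlinear bound. Writing the nonlinearity as $u\partial_x u=\tfrac12\partial_x(u^2)$ I would first note that the prefactor $\partial_x$ kills the $k=0$ frequency, so the mean zero condition \eqref{meanzero} propagates and the formally singular operator $\partial_x^{-1}\Delta_y$ never acts on a zero mode. Fixing a smooth time cutoff $\psi$ and setting $\psi_\delta(t)=\psi(t/\delta)$ supported in $[-\delta,\delta]$, I would recast the Cauchy problem as the fixed-point equation
$$\Phi(u)(t)=\psi_\delta(t)\,e^{it\phi(D_x,D_y)}u_0-\psi_\delta(t)\int_0^t e^{i(t-t')\phi(D_x,D_y)}\tfrac12\partial_x(u^2)(t')\,dt',$$
posed on a ball of functions supported in time in $[-\delta,\delta]$, so that Lemma \ref{est0} is applicable with $T=\delta$.

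Next I would assemble the two linear ingredients, both standard at the endpoint $b=\frac12$ from \cite{B93} and \cite{GTV97}: the homogeneous bound $\|\psi_\delta\,e^{it\phi(D_x,D_y)}u_0\|_{X_{s,\varepsilon,\frac12;\frac12}}\ls\|u_0\|_{H^s_xH^\varepsilon_y}$, uniform for $\delta\le1$, and the inhomogeneous (Duhamel) bound
$$\Big\|\psi_\delta(t)\int_0^t e^{i(t-t')\phi(D_x,D_y)}F(t')\,dt'\Big\|_{X_{s,\varepsilon,\frac12;\frac12}}\ls\|F\|_{Z_{s,\varepsilon;\frac12}}.$$
The point of the auxiliary $Z$-norm, and in particular of its $Y$-component carrying the $L^1_\tau$ integration, is exactly that it keeps this Duhamel estimate alive at the critical value $b=\frac12$, where the bare $X_{s,b}$ version degenerates. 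Composing this with Lemma \ref{est0} gives $\|\Phi(u)\|_{X_{s,\varepsilon,\frac12;\frac12}}\ls\|u_0\|_{H^s_xH^\varepsilon_y}+\delta^{\gamma}\|u\|^2_{X_{s,\varepsilon,\frac12;\frac12}}$, and, exploiting $\partial_x(u^2-v^2)=\partial_x((u-v)(u+v))$ together with the bilinear structure of \eqref{bexy3}, the analogous difference estimate $\|\Phi(u)-\Phi(v)\|\ls\delta^{\gamma}(\|u\|+\|v\|)\|u-v\|$ in the same norm.

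The remainder is the routine contraction bookkeeping: choosing the radius of the ball comparable to $\|u_0\|_{H^s_xH^\varepsilon_y}$ and then $\delta=\delta(\|u_0\|_{H^s_xH^\varepsilon_y})$ small enough that the $\delta^{\gamma}$ factor beats the radius, $\Phi$ becomes a contraction and yields a unique fixed point, whose restriction to $(-\delta,\delta)$ solves \eqref{KPII}. It is precisely the positive power $\delta^{\gamma}$ of Lemma \ref{est0} that permits data of arbitrary size and forces $\delta$ to depend only on the data norm. Uniqueness in the full space, persistence --- i.e. $u\in C((-\delta,\delta);H^s_xH^\varepsilon_y)$, which at the endpoint $b=\frac12$ is supplied by the $Y$-part of the $Z$-norm through the embedding of such functions into continuous-in-time ones --- and the local Lipschitz continuity of $u_0\mapsto u$ on any $\delta_0\in(0,\delta)$ then all follow from the standard fixed-point scheme combined with the difference estimate.

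I expect the genuine difficulty to be the endpoint character $b=\frac12$ forced by $\alpha=2$: there both the naive $X_{s,b}$ Duhamel estimate and the embedding $X_{s,b}\hookrightarrow C_tH^s_x$ fail, so one must route everything through the $Y$- and $Z$-norms while still harvesting a positive power of the time length for the contraction. That delicate balance, however, has already been arranged inside Lemma \ref{est0} by the time-localization inequality \eqref{time} invoked at exponents $b<\frac12<\tilde b$; at the level of the present theorem what is left is the comparatively mechanical task of combining the linear estimates, running the contraction, and reading off the continuity and dependence statements.
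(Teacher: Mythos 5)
Your proposal is correct and follows exactly the route the paper takes: the paper's ``proof'' of Theorem \ref{finest} is a one-sentence appeal to the contraction mapping principle built on Lemma \ref{est0}, with the standard endpoint linear and Duhamel estimates in the $X$-, $Y$-, $Z$-framework cited from \cite{B93}, \cite{GTV97}, \cite{KPV96a}, \cite{KPV96b} and \cite{AG}. You have merely written out the details that the paper delegates to those references, including the correct role of the $Y$-norm at $b=\frac12$ and of the $T^{\gamma}$ factor from \eqref{bexy3}.
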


\begin{theorem}[precise version of Theorem 4]
\label{mothers}
Let $3 < \alpha \le 4$, $s \ge s' > \frac{3- \alpha}{2}$ and $\e \ge 0$. Then for
$u_0 \in H^{s}_xH^{\e }_y(\T^3)$ satisfying \eqref{meanzero} there exist $b>\frac12$, $\beta >0$,
$\delta=\delta(\|u_0\|_{H^{s'}_xH^{\e }_y})>0$ and a unique solution $u \in X^{\delta}_{s,\e , b; \beta} \subset C^0((-\delta, \delta),H^{s}_xH^{\e }_y(\T^3))$.
This solution depends continuously on the data, and extends globally in time, if $s\ge0$ and $\e =0$.
\end{theorem}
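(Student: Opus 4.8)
The plan is to construct the solution by a contraction mapping argument applied to the Duhamel formulation
$$u(t) = \psi(t)\,e^{it\phi(D_x,D_y)}u_0 - \psi_\delta(t)\int_0^t e^{i(t-t')\phi(D_x,D_y)}\,\tfrac12\pd_x(u^2)(t')\,dt',$$
where $\psi$ is a smooth time cut-off and $\psi_\delta(\cdot)=\psi(\cdot/\delta)$, on a closed ball of $X_{s,\e,b;\beta}$ with $b>\frac12$ and $\beta>0$ to be fixed. Three ingredients are needed: the homogeneous linear estimate $\|\psi\,e^{it\phi}u_0\|_{X_{s,\e,b;\beta}}\ls\|u_0\|_{H^s_xH^\e_y}$, the inhomogeneous estimate $\|\psi_\delta\int_0^t e^{i(t-t')\phi}F\,dt'\|_{X_{s,\e,b;\beta}}\ls \delta^{\gamma}\|F\|_{X_{s,\e,b';\beta}}$ with a gain $\gamma>0$, and the bilinear bound \eqref{nonlin2} of Lemma \ref{est1}. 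The first two are the standard Fourier restriction norm estimates of \cite{B93}; the only point to verify is their compatibility with the extra weight. For the homogeneous estimate the time-localised free solution carries the profile $\widehat\psi(\sigma)$ in $\sigma$, and since $\beta\ge 0$ one has $(1+\lb\sigma\rb/\lb k\rb^{\alpha+1})^\beta\le(1+\lb\sigma\rb)^\beta$, which is absorbed by the rapid decay of $\widehat\psi$; for the inhomogeneous estimate $\beta\ge 0$ causes no difficulty, the weight being at most polynomial in $\sigma$. The gain $\delta^{\gamma}$ with $\gamma=1-b+b'>0$ is available for any $b\in(\frac12,b'+1)$, a nonempty range since the $b'$ produced by Lemma \ref{est1} satisfies $b'>-\frac12$.

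Next I would upgrade Lemma \ref{est1} to the anisotropic setting. Since the nonlinearity involves no $y$-derivatives and the dual $y$-frequencies add, $\eta=\eta_1+\eta_2$, the weight $\lb\eta\rb^\e$ may, after splitting according to $|\eta_1|\ge|\eta_2|$ or $|\eta_1|<|\eta_2|$, be placed on the factor carrying the larger $y$-frequency; this reduces
$$\|\pd_x(uv)\|_{X_{s,\e,b';\beta}}\ls\|u\|_{X_{s,\e,b;\beta}}\|v\|_{X_{s,\e,b;\beta}}$$
to the case $\e=0$ already contained in \eqref{nonlin2}, for every $\e\ge 0$. The same case analysis furnishes the off-diagonal variant in which one factor is measured at regularity $s'$ and the other at $s$, which I will use for the persistence of regularity below.

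With these estimates the map above is a contraction on a ball of radius $\sim\|u_0\|_{H^{s'}_xH^\e_y}$ in $X_{s',\e,b;\beta}$ once $\delta$ is chosen small in terms of that norm, which yields a unique fixed point, hence a local solution, together with the locally Lipschitz dependence on the data. Running the contraction at the lower regularity $s'$ produces the existence time $\delta=\delta(\|u_0\|_{H^{s'}_xH^\e_y})$, and the off-diagonal form of the bilinear estimate gives the a priori bound $\|u\|_{X^\delta_{s,\e,b;\beta}}\ls\|u_0\|_{H^s_xH^\e_y}+\delta^{\gamma}\|u\|_{X^\delta_{s,\e,b;\beta}}\|u\|_{X^\delta_{s',\e,b;\beta}}$; since $\delta^{\gamma}\|u\|_{X^\delta_{s',\e,b;\beta}}\ls\delta^{\gamma}\|u_0\|_{H^{s'}_xH^\e_y}\le\frac12$ by the choice of $\delta$, the last term is absorbed and the solution persists at the higher regularity $s$ on the same interval. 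Combined with the embedding $X_{s,\e,b;\beta}\hookrightarrow C^0_tH^s_xH^\e_y$ for $b>\frac12$ (which follows from \eqref{5.1} and the standard embedding at $\beta=0$), this proves the local part.

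For the global statement I restrict to $\e=0$, $s\ge 0$, and choose the auxiliary regularity $s'\in(\frac{3-\alpha}{2},0)$, admissible since $3<\alpha\le 4$ forces $\frac{3-\alpha}{2}<0$. Because $s'<0$ we have $\|w\|_{H^{s'}_xL^2_y}\ls\|w\|_{L^2_{xy}}$, so the local existence time is bounded below by a quantity $\delta_0(\|u_0\|_{L^2_{xy}})$ controlled solely by the $L^2_{xy}$-norm. The latter is conserved along the flow, formally because the linear part of \eqref{KPII} is skew-adjoint and $\int u^2\pd_x u\,dxdy=\frac13\int\pd_x(u^3)\,dxdy=0$, rigorously by approximation using the continuous dependence just established, so the time step in the iteration never shrinks and the solution extends to all of $\R$. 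The main obstacle is not any single new estimate, the decisive bilinear bound being already Lemma \ref{est1}, but rather this last mechanism: arranging that the local existence time depends only on the subcritical, $L^2$-dominated norm $H^{s'}_xL^2_y$, which is exactly what the freedom $s'<0$ afforded by the high dispersion $3<\alpha\le 4$ makes possible.
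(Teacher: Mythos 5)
Your proposal is correct and follows exactly the route the paper itself takes: the paper's ``proof'' of this theorem is a one-sentence appeal to the contraction mapping principle combined with Lemma \ref{est1}, deferring the standard Fourier restriction norm machinery (time-localised linear and Duhamel estimates, the $\delta^{1-b+b'}$ gain, the $C^0_tH^s_xH^\e_y$ embedding, and globalisation via $L^2$-conservation with existence time controlled by the subcritical $H^{s'}_xL^2_y$-norm) to \cite{B93}, \cite{GTV97}, \cite{KPV96a}, \cite{KPV96b} and \cite{AG}. Your write-up is a faithful and accurate expansion of precisely that argument, including the correct mechanism for the global statement.
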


With the estimates from Lemma \ref{est0}
 and \ref{est1} at our disposal the proof
of these theorems is done by the contraction mapping principle, cf. \cite{B93}, \cite{GTV97},
\cite{KPV96a}, \cite{KPV96b}. The reader is also referred to section 1.3 of \cite{AG},
where the related arguments are gathered in a general local well-posedness theorem.

\end{document}